\newcommand{\red}[1]{{#1}}
\newcommand{\GL}{\mathrm{GL}}
\newcommand{\ol}{\overline}
\newcommand{\field}{\mathbb}
\newcommand{\C}{{\field C}}
\newcommand{\R}{{\field R}}
\newcommand{\Q}{{\field Q}}
\newcommand{\lam}{\lambda}
\newcommand{\SO}{\mathrm{SO}}
\newcommand{\SL}{\mathrm{SL}}
\newcommand{\lra}{\longrightarrow}
\newcommand{\per}{\textrm{per}}
\newcommand{\con}{\textrm{con}}
\newtheorem{prop}{Proposition}[section]
\newtheorem{cor}[prop]{Corollary}
\newtheorem{lemma}[prop]{Lemma}
\newtheorem{theorem}[prop]{Theorem}
\numberwithin{equation}{section}
\newtheorem{corollary}[prop]{Corollary}
\theoremstyle{definition}
\newtheorem{defn}[prop]{Definition}
\newtheorem{remark}[prop]{Remark}
\newtheorem{example}[prop]{Example}
\newcommand{\IB}{A} 
\newcommand{\frb}{\mathfrak{b}}
\newcommand{\frg}{\mathfrak{g}}
\newcommand{\frn}{\mathfrak{n}}
\newcommand{\frp}{\mathfrak{p}}
\newcommand{\frt}{\mathfrak{t}}
\newcommand{\bbC}{\mathbb{C}}
\newcommand{\bbR}{\mathbb{R}}
\newcommand{\bbZ}{\mathbb{Z}}
\newcommand{\caB}{\mathcal{B}}
\newcommand{\caC}{\mathcal{C}}
\newcommand{\caO}{\mathcal{O}}
\newcommand{\caP}{\mathcal{P}}
\newcommand{\caV}{\mathcal{V}}
\newcommand{\caX}{\mathcal{X}}
\newcommand{\beq}{\begin{equation}}
\newcommand{\eeq}{\end{equation}}
\def \Ad  {\mathop{\hbox {Ad}}\nolimits}
\def\ad {\mathop{\hbox {ad}}\nolimits}
\renewcommand{\c}{\mathrm{con}}
\begin{document}
\title[orbits of a symmetric subgroup]{A note on the orbits of a symmetric subgroup in the flag variety}
\author{Leticia Barchini}
\address{Department of Mathematics, Oklahoma State University, Stillwater, OK 74078}
\email{leticia@math.okstate.edu}

\author{Peter E.~Trapa}
\address{Department of Mathematics, University of Utah, Salt Lake City, UT 84112-0090}
\email{peter.trapa@utah.edu}

\maketitle

{\centering\footnotesize \em To Toshiyuki Kobayashi on the occasion of his 60th birthday.\par}

\begin{abstract}
Motivated by relating the representation theory of the split real and $p$-adic forms of a connected reductive
algebraic group $G$, we describe a subset of $2^r$ orbits on the complex flag variety for a certain symmetric subgroup.  (Here $r$ is the semisimple rank
of $G$.)
This set of orbits has the property that, while the closure of individual orbits are generally singular, they are always smooth along other orbits in the set.  This, in turn,
implies consequences for the representation theory of the split real group.
\end{abstract}

\section{introduction}

The main result of this note is a simple statement about the orbits of a certain symmetric subgroup on the flag variety for a complex reductive Lie algebra $\frg$.  As we explain below, it relies on deeper motivation to prove.  Setting aside that motivation for the moment, the result can be stated as follows.  Let $\frb = \frt \oplus \frn$ be a Borel subalgebra of $\frg$, and write $\Pi$ for the corresponding simple roots.  Let $(G, K)$ denote the complex symmetric pair corresponding to the equal-rank quasisplit real form of $G$.    Fix an ordering of $\Pi$.  Then to each subset $S$ of $\Pi$,
we attach an orbit $Q_S$ of $K$ on the flag variety $\caB$ for $\frg$ such that $Q_\emptyset$ is closed;
\begin{equation}
\label{e:intro-dim}
\dim(Q_S) = \dim(Q_\emptyset) + |S|;
\end{equation}
and
\begin{equation}
\label{e:intro-closure}
\text{$Q_{S'} \subset \overline Q_S$ iff $S' \subset S$.}
\end{equation}
\red{
The orbit closure $\overline{Q}_S$ contains many other orbits besides the various $Q_{S'}$ and is generally highly singular.  However, if we further assume $K$ is connected,  
$\overline{Q}_S$ is
always smooth along the orbits $Q_{S'}$.
} 
In particular,
\begin{equation}
\label{e:intro-klv}
p_{Q_{S'},Q_S} = 
\begin{cases} 1 &\text{if $S' \subset S$}\\
0 &  \text{if $S' \not\subset S$},
\end{cases}
\end{equation}
where $p_{Q_{S'},Q_S}$ is the Kazhdan-Lusztig-Vogan polynomial corresponding to the trivial local systems on $Q_{S'}$ and $Q_S$.  The
definition and properties of the orbit $Q_S$ are given in Proposition \ref{p:res}, Corollary \ref{c:closure}, and
Proposition \ref{p:gfinite}.  The assertion about polynomials is proved in Theorem \ref{t:main}.  \red{See Remark \ref{r:conn} for a discussion of the hypothesis that $K$ is connected.}

The proof of the results of the previous paragraph begins to reveal our motivation.  Let $T$ be the complex torus corresponding to $\frt$.  It acts
with finitely many orbits on the span $\frg_{-1}$ of the negative simple root spaces of $\frt$ in $\frg$.  (As the notation indicates, another way to think of this span is as the $-1$-eigenspace of the adjoint action of $\rho^\vee$ in $\frg$, where $\rho^\vee$ is the half-sum of the simple coroots.) The orbits of $T$ on
$\frg_{-1}$ are easy to describe: there is a bijection between subsets of the simple roots $\Pi$ and $T$ orbits which takes a subset $S$ to the $T$ orbit 
through a vector in $\frg_{-1}$with nonzero components in exactly the negative roots spaces determined by $S$.  (See \eqref{e:Os} below.) The closure order on such orbits  corresponds to the inclusion partial order on subsets of $\Pi$, and all orbit closures are smooth.  \red{For each choice of order of $\Pi$}, we define a $T$-equivariant map in Section \ref{s:orbits},
\begin{equation}
\label{e:intro-e}
\epsilon \; : \; \frg_{-1} \longrightarrow \caB,
\end{equation}
prove it has good properties, and deduce \eqref{e:intro-closure} and \eqref{e:intro-klv} from the corresponding statements about $T$ orbits.  \red{We emphasize that the 
map $\epsilon$ and the 
definition of the orbits $Q_S$ that appear above depends on the choice of ordering on $\Pi$; see Examples \ref{e:gl4} and \ref{e:sp4} for explicit examples of this dependence.  But the good geometric relationship summarized in Theorem \ref{t:main} (and its proof) is independent of the ordering considered.}

The map $\epsilon$ in \eqref{e:intro-e} is, in disguise, a relationship between a version of unramified Langlands parameters for a split reductive $p$-adic group and the Langlands parameters (or rather the Adams-Barbasch-Vogan \red{reformulation} of them) of a corresponding split real group.  This is the motivation mentioned above.  \red{Describing such a 
relationship requires recalling large parts of \cite{v:llc}, and we will not attempt to do so here.  We will instead briefly recall how the geometry of $K$ orbit closures on $\caB$ (respectively, the $T$ orbit closures on $\frg_{-1}$) control the multiplicities of  composition factors in certain standard modules for a corresponding real group (respectively, a corresponding $p$-adic group).}

From \red{the perspective of the previous paragraph}, the geometric considerations above should be thought of ``on the dual side'', and they translate into precise
representation theoretic statements.  In more detail, let $G'_\bbR$ be the split real form of the Langlands dual of $G$, and let $I(\rho)$ denote the spherical principal series of $G_\bbR'$ with trivial infinitesimal character.   If we apply the duality of \cite{vogan:IC4} to \eqref{e:intro-klv}, \red{
we conclude that 
to each subset $S \subset \Pi$, there is an irreducible subquotient $J_S$ of $I(\rho)$ that occurs with multiplicity 1.  (For example, $J_\emptyset$ is the trivial representation.) Note that just as $\overline Q_S$ can include many other orbits besides the various $Q_{S'}$ and the singularities along these other orbits can be complicated, there are generally far more composition factors of $I(\rho)$ than the various $J_S$, and these composition factors can occur with complicated multiplicities in $I(\rho)$ described by the Kazhdan-Lusztig-Vogan algorithm. Again, we emphasize that the definition of $Q_S$, and therefore $J_S$, depends on a choice of order of $\Pi$.  If we change the choice of order, we potentially get a different set of composition factors that have multiplicity one.  But, even by varying over all orders, we still see only a generally small subset of all composition factors of $I(\rho$).}
 
 Now, switching to the $p$-adic side, let $F$ be a $p$-adic field and let $G'_F$ denote the $F$-points of the split form of a connected reductive 
algebraic group (dual to $G$) defined over the algebraic closure $\bar F$. Results of Casselman, as explained for example in \cite[Section 4.9]{v:llc}, show that there is an unramified
principal series of $G'_F$ with one-dimensional subquotient whose composition factors are parameterized by the subsets $S$ of $\Pi$, and
each irreducible subquotient occurs with multiplicity one.   \red{In this way, one can view our results as a kind of multiplicity-preserving correspondence (depending on the ordering of $\Pi$) between these subquotients  and a subset of the irreducible subquotients of the spherical principal series of the real group.}

The next natural generalization of these kinds of results is to look beyond the unramified representations of the $p$-adic group to other members of the L-packets that contain them (\cite{lu:cls1,lu:cls2}).  Geometrically, this corresponds to considering other blocks of nontrivial local systems on the orbits $Q_S$ and the corresponding $T$ orbits on $\frg_{-1}$.  Unfortunately, but perhaps not surprisingly, this requires case-by-case considerations.  We carry some of them out in Example \ref{e:classical}.

Results of the kind established here were first obtained for $\mathrm{GL}(n)$ in \cite[Section 2.5]{ct}.  Our present results generalize those of \cite{ct}
to all split groups using different arguments, but only in the restrictive setting of regular integral infinitesimal character.  For classical groups, we will relax this latter hypothesis in a future
paper based on the techniques developed here \red{\cite{bt-singular}}.  Part of our motivation is a view toward
how the unitary duals of a real and $p$-adic group are related: the unitarity algorithm of \cite{altv} and (as of yet unproven) natural analogs for $p$-adic groups rely heavily on 
fine structure of the composition series of standard induced modules; so relating the latter is a natural prerequisite to relating the unitarity algorithms.

Finally, it is a pleasure to thank Professor Toshiyuki Kobayashi for his long and inspiring career, replete with wonderful and deep results, from which we have learned immensely.  

\section{matching of orbits}
\label{s:orbits}
In this section we explain the map $\epsilon$ described in the introduction.  Let $G$ be a complex connected reductive algebraic group with Lie algebra $\frg$.  Fix a Borel subalgebra $\frb = \frt \oplus \frn$, and let $B=TN$ denote the corresponding Borel subgroup of $G$.    Write $\Pi$ for the corresponding set of simple roots, and for each $\alpha \in 
\Pi$, let
\[
\frp_\alpha = \frg_{-\alpha} \oplus \frb,
\]
\red{where} $\frg_{-\alpha}$ denotes the $-\alpha$ root space.   Let $P_\alpha$ denote the corresponding subgroup of $G$.

Let $\lambda$ be a semisimple element of $\frt$.   Let $\frg_{k}$ denote the $k$-eigenspace for $\ad(\lambda)$ acting
on $\frg$,
\[
\frg_k = \{ x \in \frg \; | \; \ad(\lambda)x = kx \}.
\]
Then the centralized in $G$ of $\lambda$, $G(\lambda)$, acts via $\Ad$ with finitely many orbits on each $\frg_k$; \red{see \cite{Vi}}.

In what follows, we will be interested in only the very special case of  $\lambda = \rho^\vee$, one-half the sum of the coroots for $\frt$ in $\frg$.   View $\lambda$, as we may, as an element of $\frt$.  Then all eigenspaces for the action of $\ad(\lambda)$ on $\frg$ are integral, and $G(\lam) = T$. 
We will be concerned only with the $-1$-eigenspace, and it is easy to see that it is spanned by the root spaces for the
negative simple roots,
\[
\frg_{-1} = \bigoplus_{\alpha \in \Pi} \frg_{-\alpha}.
\]
 (No confusion arises in practice by introducing both integer and root subscripts on $\frg$.) The $T$ orbits on $\frg_{-1}$ are very easy to describe.  For each subset $S \subset \Pi$, let
$x_S \in \frg_{-1}$ denote a vector with nonzero components in exactly those $\frg_{-\alpha}$ for $\alpha \in S$.
Then
\begin{equation}
\label{e:Os}
\caO_S := T\cdot x_S,
\end{equation}
consists of {\em all} such vectors with nonzero components in exactly those $\frg_{-\alpha}$ for $\alpha \in S$.  The closure
of $\caO_S$ consists of all vectors with components (possibly zero) in those $\frg_{-\alpha}$ for $\alpha \in S$.
From this it follows, as noted in the introduction, that the closure on the $T$ orbits in $\frg_{-1}$ corresponds to the inclusion partial order on $\Pi$.
\red{This is a special case of the more general framework of Zelevinsky \cite{zel-1,zel-2}, partly based on evidence provided by the results of Casselman
mentioned in the introduction.}

Let $y(\lam) = \exp(i\pi\lam)$.  By the integrality of $\lam  = \rho^\vee$, $y(\lam)^2$ is central in $G$.  Let $\theta$
denote the involution of $G$ obtained by conjugation by $y(\lam)$, and let $K$ be the fixed points of $\theta$.  Since 
$T$ centralizes $\lambda$, $T\subset K$.  (In other words, the symmetric pair $(G,K)$ is equal rank.)  Moreover, 
again by construction, the differential of $\theta$ sends each root vector $x_\alpha \in \frg_\alpha$ to $-x_\alpha$, for 
$\alpha \in \Pi$.   Equivalently, the Borel subalgebra $\frb$ is $\theta$-stable and every simple root is noncompact
imaginary.  (In other words, \red{$(G,K)$ is equal rank and quasisplit.})  Let $\caB \simeq G/B$ denote the variety of Borel algebras in $\frg$.  Then $K$ acts with finitely many orbits on $\caB$, and (since $\frb$ is $\theta$-stable) the orbit $K\cdot \frb$ is closed.  

We seek to define the map $\epsilon$ of \eqref{e:intro-e}.  For this, we fix a choice of ordering of $\Pi$,
\begin{equation}
\label{e:pi-ordered}
\Pi = \{ \alpha_1, \alpha_2, \dots, \alpha_n\}.
\end{equation}
In what follows, the map $\epsilon$ depends on this choice of order, but the result we state below are valid for all choices.

\begin{defn} Let $S = \{ \alpha_{j_1},  \alpha_{j_2}, \ldots,  \alpha_{j_s}\}$ be an (ordered) subset of $\Pi$ consisting of distinct roots.  To simpliy notation, write $P_{k}$ for the parabolic subgroup corresponding to $\alpha_{j_k}$ defined above.
Define
\[ \caX^S = K\underset{K\cap B}{\times}P_{s}\underset{ B}{\times}P_{{s-1}}\underset{ B}{\times}\ldots \underset{ B}{\times}P_{1}\]
the quotient of  $K\times P_{s}\times \ldots \times P_{1}$ by the action
\[ (b_0, b_s, b_{s-1}, \ldots, b_{1}) \cdot  (k_0, y_{s},  y_{{s-1}}, \ldots , y_{1}) =
(k_0 \;b_0, b_0^{-1}\; y_{s} \;b_{s},   b^{-1}_{s}  \;  y_{{s-1}} \;b_{ s-1} , \ldots, b_1^{-1}\; y_{1}).
\] 
(If $S= \emptyset$, this reduces to $\caX^\emptyset = K/(K\cap B)$.)
Let $K$ act on $\caX^S$ via 
\[ k \cdot [k_0, y_{s},  y_{{s-1}}, \ldots , y_{1}] =   [ k \;k_0, y_{s},  y_{{s-1}}, \ldots , y_{1}].\]
\end{defn}

\medskip

\begin{lemma}  
 $\caX^S$ is a smooth variety of dimension $ \text{dim}(K\cdot \frb)  + |S|.$
 If $K$ is connected, $\caX^S$ is irreducible.
\end{lemma}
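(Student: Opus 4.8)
The plan is to identify $\caX^S$ with an iterated $\bbP^1$-bundle over the closed orbit $K\cdot\frb\cong K/(K\cap B)$, built by the standard associated-bundle (balanced product) construction, and then read off the assertions from the tower. I would use two ingredients. First: for a closed subgroup $H$ of an algebraic group $G'$ and an $H$-variety $Z$, the balanced product $G'\times^H Z$ (the quotient of $G'\times Z$ by $h\cdot(g,z)=(gh^{-1},hz)$) exists as a variety and is an étale-locally trivial fiber bundle over the smooth homogeneous variety $G'/H$ with fiber $Z$; hence its projection to $G'/H$ is a smooth morphism whenever $Z$ is smooth, so $G'\times^H Z$ is then smooth, $\dim(G'\times^H Z)=\dim(G'/H)+\dim Z$, and $G'\times^H Z$ is irreducible whenever both $G'/H$ and $Z$ are. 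Second: $P_\alpha/B\cong\bbP^1$ for every $\alpha\in\Pi$, so $\dim(P_\alpha/B)=1$ and each $P_\alpha$ is connected.

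Next I would unwind the Definition. Writing $S=\{\alpha_{j_1},\dots,\alpha_{j_s}\}$ and $P_k$ for the parabolic attached to $\alpha_{j_k}$, I would introduce the tower of $B$-varieties $Z^{(1)}:=P_1/B$ (a $B$-variety under left translation) and $Z^{(k)}:=P_k\times^B Z^{(k-1)}$ for $2\le k\le s$, and then check that $\caX^S\cong K\times^{K\cap B}Z^{(s)}$. This is a direct comparison of equivalence relations: $b_1$ in the Definition is the copy of $B$ divided out (acting on the right of $P_1$) in forming $Z^{(1)}$; $b_k$ is the copy of $B$ divided out in forming $Z^{(k)}$ from $Z^{(k-1)}$ (on the right of $P_k$, and after passage to the quotient on the left of $Z^{(k-1)}$); and $b_0\in K\cap B$ is divided out in the final outer step $K\times^{K\cap B}(-)$. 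The case $s=1$ is immediate and the general case is the same bookkeeping.

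Then I would run the finite induction through the tower. The base case $S=\emptyset$ is $\caX^\emptyset=K/(K\cap B)=K\cdot\frb$ (the stabilizer of $\frb$ in $K$ being $K\cap N_G(\frb)=K\cap B$): it is smooth and homogeneous, of dimension $\dim K-\dim(K\cap B)=\dim(K\cdot\frb)$, and it is irreducible precisely when $K$ is connected (for disconnected $K$ the closed orbit may split into several components). By the first ingredient, each $Z^{(k)}=P_k\times^B Z^{(k-1)}$ is a $\bbP^1$-bundle over $P_k/B\cong\bbP^1$ with fiber $Z^{(k-1)}$, hence smooth and irreducible of dimension $1+\dim Z^{(k-1)}$; irreducibility needs no hypothesis on $K$ here because each $P_k$ is connected. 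Inductively $Z^{(s)}$ is smooth and irreducible of dimension $s$, and then $\caX^S=K\times^{K\cap B}Z^{(s)}$ is a fiber bundle over $K/(K\cap B)$ with smooth irreducible fiber $Z^{(s)}$, so it is smooth of dimension $(\dim K-\dim(K\cap B))+s=\dim(K\cdot\frb)+|S|$, and it is irreducible whenever $K/(K\cap B)$ is, that is, whenever $K$ is connected.

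I expect the only delicate point to be the outer layer when $K$ is disconnected: then $K\cap B$, while solvable as a closed subgroup of the solvable group $B$, need not be connected, so $K\to K/(K\cap B)$ need not be Zariski-locally trivial. This is harmless, since $K\cap B$ is a smooth group (we work over $\bbC$), so this torsor — and therefore the associated-bundle projection $\caX^S\to K/(K\cap B)$ — is still étale-locally trivial, which is all the smoothness argument and the dimension count require. Otherwise there is no genuine obstacle: the content is the bookkeeping identifying $\caX^S$ with the iterated balanced product and tracking on which side each copy of $B$ acts.
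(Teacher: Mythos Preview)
Your argument is correct and is precisely the standard iterated fiber-bundle argument that the paper's references \cite{be,ewy} contain; the paper itself gives no proof beyond those citations, so you have simply written out what they defer.  One small wording slip: when you say ``each $Z^{(k)}=P_k\times^B Z^{(k-1)}$ is a $\bbP^1$-bundle over $P_k/B\cong\bbP^1$ with fiber $Z^{(k-1)}$,'' that is self-contradictory (a $\bbP^1$-bundle has fiber $\bbP^1$); you mean it is a fiber bundle over $P_k/B\cong\bbP^1$ with fiber $Z^{(k-1)}$, or equivalently (and more in keeping with your opening sentence) you could peel off the rightmost factor to exhibit $\caX^S$ as a $\bbP^1$-bundle over $\caX^{S\setminus\{\alpha_{j_1}\}}$.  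Either formulation yields the same smoothness, irreducibility, and dimension conclusions, so this does not affect the proof.
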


\begin{proof} This is well-known. See \cite[Section 6]{be} and \cite{ewy}.
\end{proof}
\medskip

\begin{prop}  \label{p:res} 
 For $S \subset \Pi,$ let
 \begin{align*}
 \tau^S : \caX^S & \longrightarrow \caB\\
 [k, y_{s},  y_{{s-1}}, \ldots , y_{1}] & \longrightarrow k \text{exp}(y_{s})\ldots \text{exp}(y_{1}) \cdot \frb.
 \end{align*}
 The map   $\tau^S$ is a well-defined $K$-equivariant map. Moreover, 
 $\tau^S(\caX^S)$ is the closure of a single $K$ orbit which we define to be $Q_S$. 
 
 Let $\pi_{k}$ denote the natural projection from $G/B$ to $G/P_{k}$.  Then, \red{the} closure of $Q_S$
 is given by
  \begin{equation}
  \label{e:Qs}
  \overline{Q}_S =  \left ((\pi_{s}^{-1}\circ\pi_s) \circ \cdots \circ (\pi_{1}^{-1}\circ\pi_1)\right ) (Q_\emptyset),
  \end{equation}
where, by definition,  $Q_\emptyset= K\cdot \frb$.
 
 \end{prop}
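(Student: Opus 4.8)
The plan is to prove all four assertions — well-definedness of $\tau^S$, its $K$-equivariance, the fact that $\tau^S(\caX^S)$ is a single $K$-orbit closure (which is then named $Q_S$), and the recursive formula \eqref{e:Qs} — by induction on $|S|$, with \eqref{e:Qs} doing the real work. Well-definedness and equivariance are immediate from the shape of the defining formula for $\tau^S$: if one replaces a representative $(k_0,y_s,\dots,y_1)$ of a point of $\caX^S$ by its translate under an element of the group used to form the quotient, then the product of the entries changes only by right multiplication by an element of $B$ — the intervening factors cancel in consecutive pairs — and this is annihilated on acting on $\frb\in\caB$; moreover $K$ acts on $\caX^S$ solely by left multiplication on the leftmost coordinate, which is exactly its action on $\caB$ after applying $\tau^S$.

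I would then prove, by induction on $s=|S|$, that $\tau^S(\caX^S)$ is the closure of a single $K$-orbit (defined to be $Q_S$) and that \eqref{e:Qs} holds. The base case $s=0$ holds since $\tau^\emptyset(\caX^\emptyset)=K\cdot\frb=Q_\emptyset$, which is closed because $\frb$ is $\theta$-stable, and \eqref{e:Qs} reduces to $\overline{Q}_\emptyset=Q_\emptyset$. For the inductive step, write $S=\{\alpha_{j_1},\dots,\alpha_{j_s}\}$ and let $S'=\{\alpha_{j_2},\dots,\alpha_{j_s}\}$ be obtained by deleting the innermost (rightmost) factor $P_1$ from $\caX^S$, so that by the inductive hypothesis $\tau^{S'}(\caX^{S'})=\overline{Q}_{S'}$. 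There is a forgetful morphism $p\colon\caX^S\to\caX^{S'}$, $[k,y_s,\dots,y_1]\mapsto[k,y_s,\dots,y_2]$, whose fibers are copies of $P_1/B\cong\bbP^1$. Writing $\pi_1\colon\caB\to G/P_1$ for the projection in the statement, the defining formulas give $\pi_1\circ\tau^S=\pi_1\circ\tau^{S'}\circ p$, because the last factor lies in $P_1$ and is invisible in $G/P_1$; and for each $z\in\caX^{S'}$ the morphism $\tau^S$ maps the fiber $p^{-1}(z)$ onto the entire $\pi_1$-fiber through $\tau^{S'}(z)$, since $P_1$ acts transitively, with isotropy $B$, on the Borel subalgebras contained in a given minimal parabolic subalgebra of type $\alpha_{j_1}$. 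Taking the union over $z\in\caX^{S'}$ gives the one-step identity $\tau^S(\caX^S)=\pi_1^{-1}\bigl(\pi_1(\overline{Q}_{S'})\bigr)$, and this set is $K$-stable.

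It remains to identify $\tau^S(\caX^S)$ with the closure of a single $K$-orbit; this, together with the one-step identity, then yields \eqref{e:Qs} for $S$ by iteration (each deletion of a factor $P_k$ contributing an operation $\pi_k^{-1}\circ\pi_k$). The variety $\caX^S$ is complete (an iterated $\bbP^1$-bundle over $K/(K\cap B)$, which is complete because $K\cap B$ is parabolic in $K$, as $\frb$ is $\theta$-stable), so $\tau^S(\caX^S)$ is closed; it is also $K$-stable. When $K$ is connected, $\caX^S$ is irreducible by the preceding Lemma, so $\tau^S(\caX^S)$ is irreducible; since $K$ has only finitely many orbits on $\caB$, a closed irreducible $K$-stable subset of $\caB$ is the closure of a single $K$-orbit, which we take to be $Q_S$. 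For general $K$ one argues with components: $\caX^S$ has finitely many irreducible components, transitively permuted by $K$ because $K/(K\cap B)$ is a homogeneous space; each, being the image under a morphism of a complete irreducible variety, is a closed irreducible $K^{\circ}$-stable subset of $\caB$, hence the closure of a single $K^{\circ}$-orbit, and the transitivity of the $K$-action on components forces these $K^{\circ}$-orbits to lie in a common $K$-orbit $Q_S$ with $\overline{Q}_S=\tau^S(\caX^S)$. This completes the induction.

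The point I expect to require the most care is the fiberwise surjectivity used in the second paragraph: although $\tau^{S'}$ is highly non-injective, one must verify that the trailing $P_1$-factor of $\caX^S$ sweeps out the whole projective line of Borel subalgebras lying over each point of $\tau^{S'}(\caX^{S'})$, so that $\tau^S(\caX^S)$ is precisely the $\pi_1$-saturation of $\tau^{S'}(\caX^{S'})$. Everything else — well-definedness, equivariance, completeness of $\caX^S$, and the bookkeeping with irreducibility (or with components and dense $K$-orbits) — is routine.
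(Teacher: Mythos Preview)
Your argument is essentially the standard Bott--Samelson resolution argument that the paper defers to (the paper's own ``proof'' is a one-sentence reference to \cite{v:ic3}, \cite{be}, and \cite{ewy}), so the approach is the same, just spelled out in full.  The inductive mechanism---peel off the rightmost parabolic factor, identify the resulting $\bbP^1$-fiber with a $\pi$-saturation, and invoke completeness/irreducibility of the iterated bundle to conclude the image is a single orbit closure---is exactly what those references contain, and your treatment of the disconnected-$K$ case via components is a nice addition.

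One genuine point to revisit: your one-step identity $\tau^S(\caX^S)=\pi_1^{-1}\pi_1(\overline{Q}_{S'})$, with $S'=S\setminus\{\alpha_{j_1}\}$, is correct, but iterating it produces
\[
\overline{Q}_S=(\pi_1^{-1}\pi_1)\circ(\pi_2^{-1}\pi_2)\circ\cdots\circ(\pi_s^{-1}\pi_s)(Q_\emptyset),
\]
with $\pi_s^{-1}\pi_s$ applied \emph{first}, whereas \eqref{e:Qs} as printed has the opposite order of composition.  These two iterated saturations are genuinely different in general (already in the paper's own $\mathrm{Sp}(4)$ example the two orders land on $Q_{1212}$ versus $Q_{1+-1}$), so your sentence ``then yields \eqref{e:Qs} for $S$ by iteration'' is not literally right.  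Since the paper gives no details and the subsequent results only use that $\overline{Q}_S$ arises from $Q_\emptyset$ by $|S|$ successive $\pi^{-1}\pi$ operations (Corollary~\ref{c:closure} and Proposition~\ref{p:gfinite} are insensitive to which of the two orders is meant), this is almost certainly a typo in the displayed formula rather than a flaw in your argument---but you should flag it rather than assert that your iteration recovers \eqref{e:Qs} verbatim.
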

 \begin{proof}
This is more or less obvious from the definitions.  See Lemma 6.1 in \cite{v:ic3}, or the exposition
of  \cite[Section 6]{be} and \cite[Section 4]{ewy}.
\end{proof}
\medskip

\begin{cor}
\label{c:closure}
Fix $S,S' \subset \Pi$ and define $K$ orbits $Q_S$ and $Q_{S'}$ as in Proposition \ref{p:res}.  Then $Q_{S'} \subset \overline{Q_S}$ iff  $S' \subset S$ as in
\eqref{e:intro-closure}.
\end{cor}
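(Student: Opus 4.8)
The plan is to reduce the statement to its elementary analogue for the torus. For the $T$-action on $\frg_{-1}$ one has $\caO_{S'}\subseteq\overline{\caO_S}$ if and only if $S'\subseteq S$, because by Section~\ref{s:orbits} the closure $\overline{\caO_S}$ is exactly the coordinate subspace $\bigoplus_{\alpha\in S}\frg_{-\alpha}$. The geometric input needed to transfer this to $\caB$ is a reformulation of \eqref{e:Qs}: unwinding the projections one more step gives $\overline{Q_S}=K\cdot\overline{X_{\sigma_S}}$, where $\overline{X_{\sigma_S}}=P_s\cdots P_1\cdot\frb$ is the Schubert variety of the element $\sigma_S$ obtained by multiplying the simple reflections for the roots of $S$; since $\sigma_S$ lies in the Weyl group of the standard Levi $L_S$, this Schubert variety is contained in the closed sub-flag variety $P_S\cdot\frb\subseteq\caB$, every Borel subalgebra of which contains the nilradical of the standard parabolic $\frp_S$.

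The implication $S'\subseteq S\Rightarrow Q_{S'}\subseteq\overline{Q_S}$ follows directly from \eqref{e:Qs}: each operator $\pi_\alpha^{-1}\circ\pi_\alpha$ on closed $K$-stable subsets of $\caB$ is idempotent, order-preserving, and only enlarges its argument, so deleting some of these operators from the composition in \eqref{e:Qs} can only shrink the output; hence $\overline{Q_{S'}}\subseteq\overline{Q_S}$. (One can equally well argue through $\epsilon$: since $\caO_{S'}\subseteq\overline{\caO_S}$, $T$-equivariance and continuity give $\epsilon(\caO_{S'})\subseteq\overline{Q_S}$, and then $Q_{S'}=K\cdot\epsilon(\caO_{S'})\subseteq\overline{Q_S}$ because $\overline{Q_S}$ is $K$-stable and closed.)

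For the converse I argue by contraposition. Fix $\alpha\in S'\setminus S$, and let $\caZ_\alpha$ be the closed, $K$-stable subset of $\caB$ consisting of all Borel subalgebras that contain some $K$-translate $\Ad(k)\frg_\alpha$. Because $\alpha\notin S$, the root space $\frg_\alpha$ lies in the nilradical of $\frp_S$, hence $\frg_\alpha\subseteq\frb'$ for every $\frb'\in\overline{X_{\sigma_S}}$, and therefore $\overline{Q_S}=K\cdot\overline{X_{\sigma_S}}\subseteq\caZ_\alpha$. Suppose, for contradiction, that $Q_{S'}\subseteq\overline{Q_S}$. By the first implication $Q_{\{\alpha\}}\subseteq\overline{Q_{S'}}\subseteq\overline{Q_S}\subseteq\caZ_\alpha$. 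On the other hand $\overline{Q_{\{\alpha\}}}=K\cdot(P_\alpha\cdot\frb)$, so $Q_{\{\alpha\}}$, being open in its closure, meets the $\bbP^1$-fibre $P_\alpha\cdot\frb$ in a dense open set; since $\caZ_\alpha$ is closed it therefore suffices to produce one Borel subalgebra in $P_\alpha\cdot\frb$ that is not in $\caZ_\alpha$, for then the generic Borel in $P_\alpha\cdot\frb$ — which lies in $Q_{\{\alpha\}}$ — avoids $\caZ_\alpha$, contradicting $Q_{\{\alpha\}}\subseteq\caZ_\alpha$. Concretely, such a Borel is $\epsilon(x)$ for generic $0\ne x\in\frg_{-\alpha}$.

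The genuinely computational point — the main obstacle — is this last assertion: $\epsilon(x)$ contains no $K$-translate of $\frg_\alpha$ when $\alpha\in\mathrm{supp}(x)$. I would verify it inside the big cell $\exp(\frn^-)\cdot\frb\cong\frn^-$: writing $\epsilon(x)=\exp(y)\cdot\frb$ with $y\in\frn^-$, the $\frg_{-\alpha}$-component of $y$ coincides with that of $x$ — since $\alpha$ is simple, no iterated bracket of negative simple root vectors has weight $-\alpha$ — so this component is nonzero exactly when $\alpha\in\mathrm{supp}(x)$. One then uses the grading $\frg=\bigoplus_k\frg_k$ that defines $\theta=\Ad(\exp(i\pi\rho^\vee))$: here $\frg_\alpha\subseteq\frp$ and $\theta$ acts by $-1$ on $\frg_\alpha$, hence on every $K$-translate of it, and a Borel of the form $\exp(y)\cdot\frb$ with nonzero $\frg_{-\alpha}$-part of $y$ cannot contain such a $\theta$-eigenline. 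This is, in essence, an $\mathfrak{sl}_2^\alpha$-computation twisted by the $K$-action, and it is precisely where the equal-rank quasisplit hypothesis on $(G,K)$ enters.
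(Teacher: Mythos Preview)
Your forward direction is correct and is exactly the argument implicit in \eqref{e:Qs}: each operator $\pi_\alpha^{-1}\pi_\alpha$ is monotone and enlarging, so deleting factors shrinks the output. Your reduction of the converse to the assertion $Q_{\{\alpha\}}\not\subset\overline{Q_S}$ for $\alpha\notin S$ is also sound. But the argument you give for this assertion has two real gaps.

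First, the set $\caZ_\alpha=K\cdot\{\frb':\frg_\alpha\subset\frb'\}$ is not obviously closed: you are saturating a closed set by a non-compact group, and although $K$ has finitely many orbits on $\caB$, that alone does not force $K$-saturations of closed sets to be closed. You invoke closedness to pass from ``one Borel in $P_\alpha\cdot\frb$ avoids $\caZ_\alpha$'' to ``the generic one does.'' This step can in fact be bypassed: since $\alpha$ is noncompact imaginary for $Q_\emptyset$, every point of $P_\alpha\cdot\frb$ other than $\frb$ and $s_\alpha\cdot\frb$ already lies in the open orbit $Q_{\{\alpha\}}$, so $\epsilon(x)\in Q_{\{\alpha\}}$ for every $0\neq x\in\frg_{-\alpha}$, and showing $\epsilon(x)\notin\caZ_\alpha$ for a single such $x$ finishes the job directly.

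Second, and more seriously, that last claim --- $\epsilon(x)\notin\caZ_\alpha$, i.e.\ $\Ad(k)\frg_\alpha\not\subset\exp(x)\cdot\frb$ for every $k\in K$ --- is exactly what you flag as ``the main obstacle,'' and you do not prove it. The sentence ``a Borel of the form $\exp(y)\cdot\frb$ with nonzero $\frg_{-\alpha}$-part of $y$ cannot contain such a $\theta$-eigenline'' is an assertion, not an argument: any Borel contains many lines on which $\theta$ acts by $-1$, so being $\theta$-odd does not distinguish $\Ad(K)\frg_\alpha$ from other lines, and nothing in your sketch uses the specific shape of the $K$-orbit $\Ad(K)\frg_\alpha\subset\frp$. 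Calling it ``an $\mathfrak{sl}_2^\alpha$-computation twisted by the $K$-action'' is not a proof either, because for general $k\in K$ the line $\Ad(k)\frg_\alpha$ is not contained in $\mathfrak{sl}_2^\alpha$ and can have components in many root spaces, including negative ones.

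The paper itself offers no explicit proof of the corollary; it is recorded as a direct consequence of Proposition~\ref{p:res} and the standard references there. If you want an intrinsic argument in the spirit of the paper, the cleanest route is through the later results: by Proposition~\ref{p:ib} the set $U:=[K\cap\bar B]\cdot\epsilon(\frg_{-1})$ is isomorphic, $K\cap\bar B$-equivariantly, to $[K\cap\bar B]\times_T\frg_{-1}$, hence is a smooth locally closed subvariety of $\overline{Q_\Pi}$ of full dimension and therefore open in $\overline{Q_\Pi}$. By Corollary~\ref{c:opendense} (with $K$ connected) the set $(K\cap\bar B)\cdot\epsilon(\caO_S)$ is dense in $Q_S$, hence $\overline{Q_S}\cap U$ is the closure in $U$ of $(K\cap\bar B)\cdot\epsilon(\caO_S)$, which under the isomorphism is $(K\cap\bar B)\times_T\overline{\caO_S}$. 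Thus $\epsilon(x_{S'})\in\overline{Q_S}$ forces $\caO_{S'}\subset\overline{\caO_S}$, i.e.\ $S'\subset S$. This is the transfer you announced in your first paragraph, carried out rigorously.
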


\begin{example}
\label{e:gl4}
Let $G = \GL(4,\bbC)$.  Label the simple roots so that $\alpha_2$ corresponds to the middle node
of the Dynkin diagam.  (In other words, $\alpha_2$ is the unique simple root not orthogonal to any other
simple roots.)  In this
setting,  $K \simeq \GL(2,\bbC) \times \GL(2,\bbC)$.  The orbits of $K$ on $\caB$
are well-known, and parametrized in terms of certain involutions with signed fixed points called clans; see
\cite{mt,y}.  The closure order is given in \red{Figure 1}.  In the figure, a 
solid edge $Q' \stackrel{i}{\rightarrow} Q$ indicates that $Q$ is dense in $\pi_i^{-1}(\pi_i(Q'))$, and so $Q'$
is in the closure of $Q$.  The dashed edges indicate closure relations not obtained in this way.  
If we fix any of the four orders of the simple roots so that $\alpha_2$ does {\em not} appear last in the order, then the boxed 
orbits are the set of orbits of the form $Q_S$ for $S \subset \Pi$.  However, if we choose one of the two orders
so that $\alpha_2$ appears last, then the set of orbits of the form $Q_S$ are the boxed orbits {\em except}
that the orbit labeled by \red{$1\!+\!-\!1$} does not correspond to $S=\Pi$, and instead the shadow-boxed orbit labeled $1212$
does.  A hand calculation (or using the {\tt atlas} software) implies that the closures of $Q_{1+-1}$ and $Q_{1212}$ are
both smooth along all other boxed orbits; but also reveals that the closure of  $Q_{1+-1}$ is singular along $Q_{++--}$, while the closure of $Q_{1212}$ is singular along $Q_{+--+}$ and $Q_{-++-}$.
\end{example}

\begin{figure}
\label{f:u22-example}
$${\tiny
{
\xymatrixcolsep{2pc} 
\xymatrixrowsep{3pc}
\xymatrix@d
{
&&&&&1221\\
&&&\boxed{1+-1} \ar@{>}[urr]^2 && \text{\tcbox[enhanced,size=fbox,drop shadow southwest,
    sharp corners]{1212}} \ar@{>}[u]^{1,3} && 
{1-+1}\ar@{>}[ull]^2 \\
&\boxed{1+1-} \ar@{>}[urr]^3  \ar@{.>}[urrrr]_<<<<<<<<{} 
&& \boxed{+1-1} \ar@{>}[u]_<<<<1 \ar@{.>}[urr]_<<<<<<{} 
&& \boxed{1122} \ar@{>}[u]^2 \ar@{.>}[urr]_<<<<<<{}  \ar@{.>}[ull]^<<<<<{}  
&& -1+1 \ar@{>}[u]^<<<<<1 \ar@{.>}[ull]^<<<<<<{} 
&& 1-1+ \ar@{>}[ull]^3 \ar@{.>}[ullll]^<<<<<<<<{} \\
\boxed{+11-} \ar@{>}[ur]^1 \ar@{>}[urrr]_<<<<<<<<3 
&& \boxed{11+-} \ar@{>}[ul]_<<<<<2 \ar@{>}[urrr]_<<<<<<3  
&& \boxed{+-11} \ar@{>}[ul]^<<<<<2 \ar@{>}[ur]^1  
&& -+11 \ar@{>}[ur]^<<<<<2 \ar@{>}[ul]^1   
&& {11-+} \ar@{>}[ur]_<<<<<2 \ar@{>}[ulll]_<<<<<<3    
&& {-11+} \ar@{>}[ul]^1 \ar@{>}[ulll]^<<<<<<<<3 \\
++--\ar@{>}[u]^2 
 & & \boxed{+-+-} \ar@{>}[ull]_2 \ar@{>}[u]^1\ar@{>}[urr]_<<<<<<3
 && -++- \ar@{>}[ull]^<<<<<<<1\ar@{>}[urr]_<<<<<<3
 && +--+ \ar@{>}[ull]^<<<<<<3\ar@{>}[urr]_<<<<<<1
 && -+-+ \ar@{>}[ull]^<<<<<<3 \ar@{>}[u]^1\ar@{>}[urr]^2
 && --++\ar@{>}[u]^2 
}
}
}
$$
\caption{$G=\GL(4,\bbC)$; see Example \ref{e:gl4}.}
\end{figure}

\begin{example}
\label{e:sp4}
Let $G = \mathrm{Sp}(4,\bbC)$.  Label the simple roots so that $\alpha$ is short and $\beta$ is long.  In this
case, $K\simeq \GL(2,\bbC)$.  The closure order  of $K$ orbits on $\caB$ is given in Figure \ref{f:sp4-example}.
Once again,  a 
solid edge $Q' \stackrel{i}{\rightarrow} Q$ indicates that $Q$ is dense in $\pi_i^{-1}(\pi_i(Q'))$, and so $Q'$
is in the closure of $Q$.  The dashed edges indicate closure relations not obtained in this way.  If we choose the order,
so that $\beta$ appears before $\alpha$, the boxed nodes correspond to the orbits of the form $Q_S$ for
$S \subset \Pi$.  If we instead choose the order so that $\beta$ is last, the same boxed orbits appear, {\em except}
that $Q_\Pi$ corresponds to the shadow-boxed orbits $Q_{1212}$ (rather than $Q_{1+-1}$\red{)}.  A quick hand calculation (confirmed by {\tt atlas}) shows that, in both cases of the choice of ordering, all orbits have smooth closure.  However, note that
$Q_{1+-1}$ admits a nontrivial irreducible $K$ equivariant local system, while the shadow-boxed orbit $Q_{1212}$ does not.
We return to this distinction in Example \ref{e:classical} below.
\end{example}

\begin{figure}
\label{f:sp4-example}
$$
{\tiny
{\xymatrixcolsep{.75pc}
\xymatrixrowsep{2pc}
\xymatrix{
&&&1221\\
& \boxed{1+-1} \ar[urr]^\beta& & \text{\tcbox[enhanced,size=fbox,drop shadow southwest,
    sharp corners]{1212}}\ar[u]^\alpha  &&
1-+1\ar[ull]_\beta \\
& \boxed{+11-}\ar[u]^\alpha \ar@{.>}[urr]_<<<{}& & 
\boxed{1122}\ar[u]_\beta \ar@{.>}[ull]^<<<{} 
\ar@{.>}[urr]_<<<{} &
& -11+\ar[u]_\alpha\ar@{.>}[ull]^<<<{} \\
++-- \ar[ur]^\beta & & \boxed{+-+-} \ar[ul]_\beta \ar[ur]_\alpha& &
-+-+ \ar[ul]^\alpha \ar[ur]^\beta && --++ \ar[ul]_\betaœ
}
}
}
$$
\caption{$G=\mathrm{Sp}(4,\bbC)$; see Example \ref{e:sp4}.}

\end{figure}

\begin{defn}\label{d:themape}
\red{Fix an ordering of} $\Pi = \{\alpha_1, \alpha_2, \ldots,
\alpha_n\}$  as in \eqref{e:pi-ordered}. Write
$z \in \frg_{-1}$ as $z = \sum_i  z_i$ with $z_i$ in the root space for $- \alpha_i.$
Set
\begin{align}\label{emb}
\iota  :  \frg_{-1} & \longrightarrow \caX^{\Pi}\\ \nonumber
z & \to [1, \text{exp}(z_n), \text{exp}(z_{n-1}), \ldots, \text{exp}(z_1)]; \\ \nonumber
\epsilon :  \frg_{-1} &\longrightarrow  \overline Q_\Pi\\ \nonumber
 z& \to \tau^{\Pi} (\iota (z)) = \exp(z_n) \text{exp}(z_{n-1}) \cdots \text{exp}(z_1)\cdot \frb.
 \end{align}
\end{defn} 
\medskip

\begin{lemma}
\label{l:inj}
Let $\caO_S = T\cdot x_S$ be as in \eqref{e:Os}.  Then
\[
\dim(\epsilon(\caO_S)) = \dim(\caO_S).
\]
\end{lemma}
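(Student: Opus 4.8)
The plan is to show that $\epsilon$ restricted to $\caO_S$ is injective, or at least generically finite-to-one; since $\caO_S$ is a $T$-orbit and $\epsilon$ is $T$-equivariant (because $T$ normalizes each root space $\frg_{-\alpha}$ and acts on $\caX^\Pi$ through the $K$-action, compatibly with $\tau^\Pi$), the image $\epsilon(\caO_S)$ is again a single $T$-orbit, and it suffices to compare stabilizers: $\dim \epsilon(\caO_S) = \dim \caO_S - \dim(\mathrm{Stab}_T(\epsilon(x_S))/\mathrm{Stab}_T(x_S))$, so I just need the stabilizer of $\epsilon(x_S)$ in $T$ to be no bigger than the stabilizer of $x_S$. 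The latter is the subtorus $T_S := \{t \in T : \alpha(t) = 1 \text{ for all } \alpha \in S\}$ (more precisely its identity component, plus possibly finite pieces, but dimension is what matters), so I must show that if $t\cdot \epsilon(x_S) = \epsilon(x_S)$ then $\alpha(t)=1$ for all $\alpha \in S$.

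The key computation is to understand the Borel $\epsilon(x_S) = \exp(z_n)\cdots\exp(z_1)\cdot\frb$ where $z_i \ne 0$ exactly for $\alpha_i \in S$. First I would reduce to the case $S = \Pi$: if $\alpha_i \notin S$ then $z_i = 0$ and $\exp(z_i) = 1$ drops out of the product, so $\epsilon(x_S)$ only involves the root subgroups $U_{-\alpha}$ for $\alpha \in S$, and the argument for general $S$ is formally identical to the argument for $\Pi$ with $\Pi$ replaced by $S$ (the relevant subgroup being the standard Levi/parabolic generated by $\pm S$). So assume $z_i \ne 0$ for all $i$ and write $u = \prod_{i=n}^{1} \exp(z_i) \in U^-$, the product of one nontrivial element from each simple negative root subgroup, in the fixed order. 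I then need: $\mathrm{Stab}_T(u\cdot\frb)$ is (up to finite index) $\bigcap_{\alpha\in\Pi}\ker\alpha$, which is finite. Equivalently, if $t\in T$ and $t u \frb = u \frb$, i.e. $u^{-1} t u \in B$, then $t$ lies in the center, i.e. $\alpha(t) = 1$ for all simple $\alpha$.

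To see this, conjugate: $u^{-1} t u = t\cdot (t^{-1} u^{-1} t) u$. Now $t^{-1}\exp(z_i) t = \exp(\Ad(t^{-1})z_i) = \exp(\alpha_i(t)^{-1} z_i)$ for the appropriate sign of exponent, so $t^{-1} u^{-1} t$ is again a product of simple negative root group elements, with the $i$-th entry scaled by $\alpha_i(t)^{\pm1}$. Thus $u^{-1}tu = t \cdot w$ where $w \in U^-$ is the product of $\exp((\alpha_i(t)^{-1}-1)(-z_i))$-type factors (being careful with the non-commutativity, but the point is that $w$ projects, in the abelianization $U^-/[U^-,U^-] = \bigoplus_{\alpha\in\Pi}\frg_{-\alpha}$, to $\sum_i (\alpha_i(t)^{-1}-1) z_i$). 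For $u^{-1}tu$ to lie in $B = TN$ we need its $U^-$-part to be trivial; projecting to $U^-/[U^-,U^-]$ and using $z_i\ne 0$ forces $\alpha_i(t) = 1$ for every $i$. This gives $\mathrm{Stab}_T(\epsilon(x_S)) \subseteq T_S$ (in fact equality, since $T_S$ clearly stabilizes), hence $\dim\epsilon(\caO_S) = \dim T - \dim T_S = \dim\caO_S$.

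The main obstacle is handling the non-commutativity of $U^-$ cleanly: $u$ is an ordered product and conjugation by $t$ does not simply rescale it unless one passes to the abelianization, so the one genuinely careful step is the projection $U^- \to U^-/[U^-,U^-] \cong \bigoplus_{\alpha\in\Pi}\frg_{-\alpha}$ and checking that the simple-root components of $u^{-1}tu \bmod [U^-,U^-]$ are exactly $(\alpha_i(t)^{-1} - 1)(-z_i)$ — the commutator corrections all land in $[U^-,U^-]$ and drop out. Once that is in place the conclusion is immediate. (One should also note that $\epsilon(x_S)$ is a well-defined point, which follows from Proposition \ref{p:res}, and that no issue arises from $K$ versus $G$: everything above takes place inside the $G$-orbit structure and only the $T$-action is used.)
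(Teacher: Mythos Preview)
Your argument is correct and essentially the same as the paper's: both show $\mathrm{Stab}_T(\epsilon(x_S)) \subset \mathrm{Stab}_T(x_S)$ by observing that if $t$ fixes $u\cdot\frb$ for $u\in \bar U$, then (since $\bar U \cap B = 1$) $t$ must centralize $u$, and then reading off the $\frg_{-1}$ component. The only cosmetic difference is that the paper uses Baker--Campbell--Hausdorff to write $\exp(z_n)\cdots\exp(z_1)=\exp(x_S+z)$ with $z\in\bigoplus_{k\le -2}\frg_k$ and then invokes the $\rho^\vee$-grading directly, whereas you project to the abelianization $U^-/[U^-,U^-]$; these are two phrasings of the same idea, and the paper's version avoids your reduction to $S=\Pi$ but is otherwise equivalent.
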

\begin{proof}
Since $\epsilon$ is $T$-equivariant, $Z_T(x_S) \subset  Z_T(\epsilon(x_S))$.  So the result follows from the
other containment
\begin{equation}
\label{e:supset}
Z_T(x_S) \supset Z_T(\epsilon(x_S)).
\end{equation}
Write $x_S = \sum_i z_i$ with possibly some of $z_i$'s equal to zero.  Because $[\frg_{-1},\frg_{-1}] \subset 
\frg_{-2}$, there is a 
$z \in \bigoplus_{k\leq -2} \frg_k$ so that
\begin{align*}
\epsilon(x_S) &:= \exp(z_n) \text{exp}(z_{n-1}) \cdots \text{exp}(z_1)\cdot \frb \\
&=\exp(z_n+ z_{n-1} + \cdots + z_1 + z )\cdot \frb \\
&=\exp(x_S + z)\cdot  \frb.
\end{align*}
If $t \in T$ centralizes $\epsilon(x_S)$, it thus centralizes $x_S+z$.  Since $T$ preserves the grading of $\frg = \bigoplus_k \frg_k$, if $t\in T$ centralizes $x_S +z$, it must centralize
$x_S$, and so \eqref{e:supset} follows.
\end{proof}
\medskip

\begin{lemma} \label{l:freeaction}
Write $T\bar U$ for the opposite Borel subgroup to $B=TU$.  Then $\bar{U}\cap K$ acts  freely on 
\[
[\bar{U} \cap K] \cdot \epsilon (\frg_{-1}).
\]
Moreover, for all $x \in \frg_{-1}$,
\[
\left( [\bar{U} \cap K] \cdot \epsilon (x) \right ) \cap \epsilon(\frg_{-1}) = \epsilon(x).
\]
\end{lemma}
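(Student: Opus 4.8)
The plan is to exploit the explicit normal form for $\epsilon$ already extracted in the proof of Lemma~\ref{l:inj}, namely that for $x = \sum_i x_i \in \frg_{-1}$ (with $x_i$ in the $-\alpha_i$ root space) we have $\epsilon(x) = \exp(x + z)\cdot\frb$ for a suitable $z \in \bigoplus_{k\leq -2}\frg_k$, and more generally $\epsilon(x) = \exp(n(x))\cdot \frb$ where $n(x) \in \bar\frn := \bigoplus_{k \le -1}\frg_k = \lie(\bar U)$ is a polynomial map with $\frg_{-1}$-component equal to $x$. First I would record that $\bar U$ acts \emph{simply transitively} on the big cell $\bar U \cdot \frb \subset \caB$, so that every element of $\bar U \cdot \frb$ has the form $\exp(m)\cdot\frb$ for a unique $m \in \bar\frn$; in particular $\epsilon$ is injective and the ``coordinate'' $n(x)$ is unambiguous. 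Both assertions of the lemma then become statements about the subset $\{n(x) : x \in \frg_{-1}\} \subset \bar\frn$ and the action of $\bar U \cap K$ on it.

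Next I would set up coordinates on $\bar U \cap K$. Because $\frb$ is $\theta$-stable with every simple root noncompact imaginary, $\theta$ acts on $\bar\frn$ preserving the root-space grading and sending each simple negative root space $\frg_{-\alpha_i}$ to itself by $-1$; hence $\bar\frn \cap \frk$ (the Lie algebra of $\bar U \cap K$) is a graded subspace of $\bar\frn$ whose degree $-1$ part is \emph{zero} — it meets $\frg_{-1}$ trivially. This is the crucial structural input. Writing a general element of $\bar U \cap K$ as $\exp(u)$ with $u \in \bar\frn\cap\frk$ and using the Baker--Campbell--Hausdorff formula inside the unipotent group $\bar U$, the product $\exp(u)\exp(n(x))$ equals $\exp(m)$ with $m \in \bar\frn$ whose $\frg_{-1}$-component is still exactly $x$, since $u$ contributes nothing in degree $-1$ and all brackets land in degrees $\le -2$. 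Therefore $\exp(u)\cdot\epsilon(x) \in \bar U\cdot\frb$ always has the same $\frg_{-1}$-coordinate $x$ as $\epsilon(x)$ does.

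From here both claims follow quickly. For the freeness assertion: if $\exp(u)\cdot\epsilon(x) = \exp(u')\cdot\epsilon(x')$ with $u,u' \in \bar\frn\cap\frk$ and $x,x'\in\frg_{-1}$, comparing $\frg_{-1}$-coordinates forces $x = x'$, and then $\exp(u - \text{(BCH correction)}) $ fixes $\epsilon(x) \in \bar U\cdot\frb$; since $\bar U$ acts freely on the big cell, $\exp(u) = \exp(u')$ and hence $u = u'$ by injectivity of $\exp$ on the nilpotent group. This shows the map $(\bar U\cap K) \times \epsilon(\frg_{-1}) \to \caB$ is injective, i.e.\ $\bar U \cap K$ acts freely on its image. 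For the second assertion: if $\exp(u)\cdot\epsilon(x) = \epsilon(x')$ lies in $\epsilon(\frg_{-1})$, then again the $\frg_{-1}$-coordinates agree so $x = x'$, whence $\exp(u)$ fixes $\epsilon(x)$, and freeness of $\bar U$ on the big cell gives $\exp(u) = 1$, so $\exp(u)\cdot\epsilon(x) = \epsilon(x)$ as claimed.

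The main obstacle is purely bookkeeping: making the BCH/grading argument airtight, i.e.\ verifying carefully that multiplying by $\exp(u)$ with $u$ in the non-simple (degree $\le -2$) part of $\bar\frn\cap\frk$ genuinely cannot alter the degree $-1$ component of the $\bar U$-logarithm. This rests entirely on the identity $(\bar\frn\cap\frk)\cap\frg_{-1} = 0$, which in turn is exactly the ``every simple root is noncompact imaginary'' condition established in Section~\ref{s:orbits}; once that is invoked, the rest is a routine unipotent-group computation, and no further input (such as connectedness of $K$) is needed.
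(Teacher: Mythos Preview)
Your proof is correct, and it is somewhat more explicit than the paper's.

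For the first assertion (freeness), the paper argues more directly: since the stabilizer of $\frb$ in $G$ is $B$ and $\bar U \cap B = \{1\}$, the group $\bar U$ acts freely on the big cell $\bar U\cdot\frb$; hence so does the subgroup $\bar U\cap K$, and $\epsilon(\frg_{-1})$ sits inside that cell. No BCH, no grading, and no use of the noncompact-imaginary hypothesis is needed at this step. Your argument recovers the same conclusion but routes it through the $\frg_{-1}$-coordinate machinery, which is more than the situation requires.

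Where your extra machinery pays off is the second assertion. The paper writes only that it ``follows in a similar way'' and gives no details. Your argument --- that $(\bar\frn\cap\frk)\cap\frg_{-1}=0$ because every simple root is noncompact imaginary, so left multiplication by $\bar U\cap K$ cannot alter the degree $-1$ component of the $\bar U$-logarithm --- is exactly the missing content. Some use of this structural fact (or an equivalent) is genuinely required here, since freeness of $\bar U$ on the big cell by itself does not let one conclude $x=x'$ from $k\cdot\epsilon(x)=\epsilon(x')$. So your proof fills in what the paper leaves to the reader. Your closing remark that connectedness of $K$ plays no role is also correct and consistent with how the lemma is later used in Proposition~\ref{p:ib}.
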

\begin{proof} Suppose $k \in \bar{U}\cap K$ and
$z = \sum z_i \in \frg_{-1}$ are such that 
\begin{equation}\label{inj}
k \cdot \text{exp}(z_n)\; \text{exp}(z_{n-1})\cdots \text{exp}(z_1) \cdot \frb = \text{exp}(z_n) \; \text{exp}(z_{n-1}) \cdots\text{exp}(z_1) \cdot \frb.
\end{equation}
The stabilizer in $G$ of $\frb$ is $B$ and $\bar{U} \cap B = {1}.$   Thus \eqref{inj} implies
$$k \cdot \text{exp}(z_n)\; \text{exp}(z_{n-1})\; \ldots  \text{exp}(z_1) = \text{exp}(z_n) \; \text{exp}(z_{n-1})
 \ldots  \text{exp}(z_1),$$
from which we conclude that $ k = 1$, verifying the first assertion of the lemma. The second assertion follows in a similar way.
\end{proof}

\begin{prop}\label{p:gfinite}
With notation as in Proposition \ref{p:res}, 
\begin{equation}
\label{e:gfinite}
\text{dim}(Q_S) = \text{dim}(Q_\emptyset) + |S|,
\end{equation}
as in \eqref{e:intro-dim}.
\end{prop}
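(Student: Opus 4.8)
The plan is to establish the two inequalities $\dim(Q_S)\le\dim(Q_\emptyset)+|S|$ and $\dim(Q_S)\ge\dim(Q_\emptyset)+|S|$ separately. The first is immediate from what precedes: by Proposition \ref{p:res}, $Q_S$ is dense in $\tau^S(\caX^S)=\overline Q_S$, so $\dim(Q_S)=\dim\tau^S(\caX^S)\le\dim\caX^S$, and the Lemma recording the dimension of $\caX^S$ gives $\dim\caX^S=\dim(K\cdot\frb)+|S|=\dim(Q_\emptyset)+|S|$ (recall $Q_\emptyset=K\cdot\frb$). So the whole content lies in the reverse inequality, and for that I would exhibit inside $\overline Q_S$ an explicit subvariety of dimension $\dim(Q_\emptyset)+|S|$, assembled from the map $\epsilon$ of Definition \ref{d:themape} and the action of $\bar U\cap K$.

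For the lower bound, the first step is to observe that $\epsilon(\caO_S)\subset\overline Q_S$: if $z\in\caO_S=T\cdot x_S$, then every root-space component of $z$ outside $S$ vanishes, so in the product defining $\epsilon(z)$ the factors $\exp(z_i)$ with $\alpha_i\notin S$ are trivial, and dropping them exhibits $\epsilon(z)$ as $\tau^S$ of an element of $\caX^S$, hence as a point of $\tau^S(\caX^S)=\overline Q_S$. Since $\overline Q_S$ is $K$-stable, this gives $[\bar U\cap K]\cdot\epsilon(\caO_S)\subset\overline Q_S$. The second step is to check that the action morphism
\[
(\bar U\cap K)\times\epsilon(\caO_S)\longrightarrow\caB,\qquad (k,y)\longmapsto k\cdot y,
\]
is injective. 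Indeed, if $k_1\cdot y_1=k_2\cdot y_2$ with $y_1,y_2\in\epsilon(\caO_S)\subset\epsilon(\frg_{-1})$, then $y_2=(k_2^{-1}k_1)\cdot y_1$ lies in $\bigl([\bar U\cap K]\cdot y_1\bigr)\cap\epsilon(\frg_{-1})$, which equals $\{y_1\}$ by the second assertion of Lemma \ref{l:freeaction}; then $k_2^{-1}k_1$ fixes $y_1\in\epsilon(\frg_{-1})$, so $k_1=k_2$ by the freeness in the first assertion of Lemma \ref{l:freeaction}. Since an injective morphism preserves dimension, $\dim\bigl([\bar U\cap K]\cdot\epsilon(\caO_S)\bigr)=\dim(\bar U\cap K)+\dim\epsilon(\caO_S)$, and by Lemma \ref{l:inj} the last term is $\dim\caO_S=|S|$ (the simple roots in $S$ are linearly independent characters of $T$, so the stabilizer of $x_S$ in $T$ has codimension $|S|$).

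The final step is to identify $\dim(\bar U\cap K)$ with $\dim(Q_\emptyset)$. Let $\overline{\frn}$ denote the Lie algebra of $\bar U$, so $\frg=\overline{\frn}\oplus\frt\oplus\frn$. Since $\frb$ is $\theta$-stable and $\theta$ fixes $\frt$ pointwise, $\theta$ preserves every root space; comparing its scalar action on $\frg_\alpha$ and $\frg_{-\alpha}$ via the bracket $[\frg_\alpha,\frg_{-\alpha}]\subset\frt$ shows the two scalars agree, so $\dim(\frk\cap\frn)=\dim(\frk\cap\overline{\frn})$. Intersecting the triangular decomposition with $\frk$ then gives $\dim\frk=\dim\frt+2\dim(\frk\cap\overline{\frn})$, while $\dim(\frk\cap\frb)=\dim\frt+\dim(\frk\cap\frn)=\dim\frt+\dim(\frk\cap\overline{\frn})$; hence $\dim(Q_\emptyset)=\dim\bigl(K/(K\cap B)\bigr)=\dim(\frk\cap\overline{\frn})=\dim(\bar U\cap K)$. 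Combining the displays, $\dim\overline Q_S\ge\dim(\bar U\cap K)+|S|=\dim(Q_\emptyset)+|S|$, and since $Q_S$ is dense in $\overline Q_S$ this yields the reverse inequality, proving \eqref{e:gfinite}. The step I expect to require the most care is the injectivity of the action morphism above — but that is precisely the role played by Lemma \ref{l:freeaction}, which was arranged for this purpose; everything else amounts to dimension bookkeeping.
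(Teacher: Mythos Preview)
Your proof is correct and follows essentially the same strategy as the paper: the upper bound from $\dim\caX^S$, and the lower bound by showing $[\bar U\cap K]\cdot\epsilon(\caO_S)$ has dimension $\dim(\bar U\cap K)+|S|$ via Lemmas \ref{l:freeaction} and \ref{l:inj}, together with $\dim(\bar U\cap K)=\dim(Q_\emptyset)$. You are in fact a bit more careful than the paper in two places: you justify explicitly why $\epsilon(\caO_S)\subset\overline{Q_S}$ (rather than asserting containment in $Q_S$ itself), and you spell out the computation $\dim(\bar U\cap K)=\dim(Q_\emptyset)$ via the $\theta$-stable triangular decomposition, which the paper leaves as a one-line remark.
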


\begin{proof} 
By Proposition \ref{p:res}, 
\[ \text{dim } Q_S \leq \dim(Q_\emptyset) + |S|.\]
We argue that the converse inequality holds. 
Since $\bar{U}\cap K \cdot \epsilon(\caO_S)$ is contained in $Q_S,$
 Lemma \ref{l:freeaction} implies  
\begin{equation}
\label{e:step1}
\dim(Q_S) \geq \dim(\bar{U}\cap K) + \dim(\epsilon(\caO_S)).
\end{equation}
Because $\frb$ is $\theta$ stable $Q_\emptyset = K/(K \cap B)$, and so 
\[
\dim(Q_\emptyset) = \dim(\bar{U}\cap K).
\]
Together with Lemma \ref{l:inj} and $\dim(\caO_S) = |S|$, \eqref{e:step1} becomes
\[
\dim(Q_S) \geq \dim(Q_\emptyset) + |S|,
\]
as we wished to show.
\end{proof}
\medskip

\begin{corollary}\label{c:opendense} If $K$ is connected, 
$[K\cap \bar{B}] \cdot \epsilon (x)$ is dense in $K \cdot  \epsilon (x)$.
\end{corollary}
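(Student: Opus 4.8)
The plan is to prove the statement by a dimension count: I will squeeze $[K\cap\bar B]\cdot\epsilon(x)$ between a set whose dimension I already control and the orbit $K\cdot\epsilon(x)$, showing all three have the same dimension, and then use the connectedness of $K$ to upgrade ``full-dimensional'' to ``dense''. First I would fix $x\in\frg_{-1}$ and let $S\subset\Pi$ be its support (the set of $\alpha_i$ whose $-\alpha_i$-component of $x$ is nonzero), so that $x\in\caO_S$. Since the simple roots are linearly independent as characters of $T$, the homomorphism $T\to(\bbC^\times)^S$, $t\mapsto(t^{-\alpha})_{\alpha\in S}$, is surjective; hence $T$ acts transitively on all vectors of $\frg_{-1}$ with support exactly $S$, i.e.\ $T\cdot x=\caO_S$.

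Because $\epsilon$ is $T$-equivariant and $T\subset K\cap\bar B$, this gives $\epsilon(\caO_S)=T\cdot\epsilon(x)\subseteq[K\cap\bar B]\cdot\epsilon(x)$, and then, since also $\bar U\cap K\subseteq K\cap\bar B$ and $K\cap\bar B$ is a group,
\[
[\bar U\cap K]\cdot\epsilon(\caO_S)\ \subseteq\ [K\cap\bar B]\cdot\epsilon(x)\ \subseteq\ K\cdot\epsilon(x).
\]
The dimension of the left-hand set is computed exactly as in the proof of Proposition \ref{p:gfinite}: the freeness in Lemma \ref{l:freeaction}, together with the fact that $[\bar U\cap K]\cdot\epsilon(x')$ meets $\epsilon(\frg_{-1})$ only in $\epsilon(x')$, makes the action map $(\bar U\cap K)\times\epsilon(\caO_S)\to[\bar U\cap K]\cdot\epsilon(\caO_S)$ bijective, so with Lemma \ref{l:inj} and $\dim\caO_S=|S|$ one gets $\dim\big([\bar U\cap K]\cdot\epsilon(\caO_S)\big)=\dim(\bar U\cap K)+|S|=\dim(Q_\emptyset)+|S|=\dim(Q_S)$. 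On the other hand, dropping the trivial factors $\exp(z_i)=1$ with $\alpha_i\notin S$ in the definition of $\epsilon(x)$ exhibits $\epsilon(x)$ as a point of $\tau^S(\caX^S)=\overline Q_S$, so $K\cdot\epsilon(x)\subseteq\overline Q_S$ and $\dim\big(K\cdot\epsilon(x)\big)\le\dim(Q_S)$. Combining these, all three sets in the display have dimension $\dim(Q_S)$.

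To conclude: since $K$ is connected, $K\cdot\epsilon(x)$ is the image of the irreducible variety $K$ and is therefore irreducible, while $[K\cap\bar B]\cdot\epsilon(x)$, being an orbit of the algebraic group $K\cap\bar B$, is a locally closed (in particular constructible) subset of $K\cdot\epsilon(x)$ of the same dimension; a constructible subset of an irreducible variety having full dimension is dense, which is the assertion. I do not expect a genuine obstacle here: the argument is essentially a repackaging of the inequality already established in Proposition \ref{p:gfinite} together with Lemma \ref{l:freeaction}, and the only genuinely new input is the connectedness of $K$, used for irreducibility of the orbit. The one point worth double-checking carefully is the identity $T\cdot x=\caO_S$, which is exactly where linear independence of the simple roots enters.
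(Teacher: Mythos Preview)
Your argument is correct and is essentially the implicit proof the paper intends: the corollary is stated without proof immediately after Proposition~\ref{p:gfinite}, and the reader is meant to extract exactly the dimension count you wrote down (indeed, the proof of Proposition~\ref{p:gfinite} already asserts $[\bar U\cap K]\cdot\epsilon(\caO_S)\subseteq Q_S$, so in fact $K\cdot\epsilon(x)=Q_S$ on the nose, slightly shortening your step~5). The only new ingredient beyond Proposition~\ref{p:gfinite} is the irreducibility of $K\cdot\epsilon(x)$ coming from connectedness of $K$, which you identify correctly.
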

\medskip

\medskip

\section{matching of geometric multiplicities}
\label{s:klv}
In this section, we further refine the matching of orbits and closure relations of Proposition
\ref{p:res} and Corollary \ref{c:closure}.  Our goal is to show that the computation of local
intersection cohomology for the closure of the $T$ orbit $\caO_S$ on $\frg_{-1}$ (possibly with nontrivial
coefficients) matches the corresponding
calculation for the closure of the $K$ orbit $Q_S$ on $\caB$.  The main result is Theorem \ref{t:main}.

\begin{remark}
\label{r:klv}
If one is only interested in a statement like \eqref{e:intro-klv}, then one can proceed combinatorially case-by-case using recursion
formulas for KLV polynomials in the interval $\bigcup_{S \subset \Pi}Q_S$ between $Q_\emptyset$ and $Q_\Pi$ in
the closure order on $K$ orbits on $\caB$.  However, our aim is to intrinsically relate the orbits $\caO_S$ and $Q_S$ (as we do below).
\end{remark}

\subsection{Preliminaries.}\label{s:cgp}
Suppose $X$ is a complex algebraic variety on which a complex algebraic group $H$ acts with finitely many orbits.
Let  $\caC(H,X)$ be the category of $H$-equivariant constructible sheaves on $X.$ 
Write 
 $\caP(H,X)$   for  the   category of $H$-equivariant
perverse sheaves on $X.$    

Irreducible objects in both categories are parametrized by
the set $\Xi(H,X)$  consisting of  pairs
$(Q,\caV)$ with $Q$ an orbit of $H$ on $X$ and $\caV$ an irreducible $H$-equivariant local system supported on $Q$.
To each $\gamma \in \Xi(X,H)$, we write $\con(\gamma)$ and $\per(\gamma)$ for the corresponding irreducible
constructible and perverse sheaves.  
By taking Euler characteristics, we identify the
Grothendieck group of the categories $\caP(H,X)$ and  $\caC(H,X).$ In this way, we can consider the change of 
basis matrix,
\begin{equation}
\label{e:gengeomdecomp}
[\text{per}(\gamma)] =  \underset{\psi \in \Xi(H,X) }{\sum } (-1)^{d(\psi)} \;  C_{H,X}(\psi, \gamma) [\c(\psi)];
\end{equation}
here $\psi = (Q_\psi, \caV_\psi)$ and  $d(\psi) = \dim(Q_{\psi}).$
The matrix $\left(C_{H,X}(\psi, \gamma)\right)$ is called the {geometric multiplicity matrix.}

\subsection{Induced bundles}
Suppose a group $H$ acts on a variety $X$ with finitely many orbits.  Suppose $H \subset H'$.  Recall
the induced bundle
\[
H' \times_H X
\]
defined by quotienting $H' \times X$ by $(h'h,x) \sim(h',hx)$ for all $h \in H$.  See \cite[Chapter 7]{abv}, for example.

\label{sec:ib}
\begin{prop}\label{p:ib}
\red{Retain the general setting of Section \ref{s:orbits}; in particular, $K$ may be disconnected.}  Then
 the map
\[
\IB : [K  \cap \bar B] \times_{T} \frg_{-1}  \lra [K \cap \bar B] \cdot \epsilon (\frg_{-1})
\]
defined by
\[
\IB(k, x ) \mapsto k \;\epsilon (x)
\]
is a $K \cap \bar B$ equivariant isomorphism.
\end{prop}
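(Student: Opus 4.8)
The plan is to exhibit an explicit two-sided inverse to $\IB$ and check equivariance and regularity on both sides. First I would observe that $\IB$ is manifestly $K\cap\bar B$-equivariant for the left-translation action on the target and the action $h'\cdot(k,x) = (h'k,x)$ on the induced bundle, since $\IB(h'k,x) = h'k\,\epsilon(x) = h'\cdot\IB(k,x)$; and it is well-defined on the quotient $[K\cap\bar B]\times_T\frg_{-1}$ because for $t\in T$ the $T$-equivariance of $\epsilon$ (established in Section \ref{s:orbits}) gives $\IB(kt,t^{-1}x) = kt\,\epsilon(t^{-1}x) = kt\,t^{-1}\epsilon(x) = k\,\epsilon(x) = \IB(k,x)$. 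Surjectivity onto $[K\cap\bar B]\cdot\epsilon(\frg_{-1})$ is immediate from the definition of the target.

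The heart of the argument is injectivity, and here I would lean directly on Lemma \ref{l:freeaction}. Suppose $\IB(k,x) = \IB(k',x')$, i.e. $k\,\epsilon(x) = k'\,\epsilon(x')$ with $k,k'\in K\cap\bar B$. Then $(k'^{-1}k)\cdot\epsilon(x)$ lies in $[\bar U\cap K]\cdot\epsilon(x)$ after absorbing the torus part — more precisely, write $k'^{-1}k = u\,t$ with $u\in\bar U\cap K$ and $t\in T$ (using $K\cap\bar B = (K\cap\bar U)\rtimes T$, valid since $\bar B = \bar U\rtimes T$ and $T\subset K$). Then $u\,t\,\epsilon(x) = \epsilon(x')$, so $u\cdot\epsilon(t\cdot x) = \epsilon(x')$ by $T$-equivariance, which says $\epsilon(x')\in\left([\bar U\cap K]\cdot\epsilon(tx)\right)\cap\epsilon(\frg_{-1})$. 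The second assertion of Lemma \ref{l:freeaction} forces $\epsilon(x') = \epsilon(tx)$, and then the first assertion (freeness of the $\bar U\cap K$ action) forces $u = 1$. From $\epsilon(x') = \epsilon(tx)$ and the computation in the proof of Lemma \ref{l:inj} (that $\epsilon$ is injective, or at least that $\epsilon(x') = \epsilon(x'')$ implies $x' = x''$ since the $\exp$ of a nilpotent determines its lowest-degree part), one gets $x' = tx$. Hence $(k,x)$ and $(k',x')$ represent the same point of the induced bundle. This identification of $k'^{-1}k$ with a torus element via the Levi decomposition of $K\cap\bar B$ is the one place requiring a little care, and it is the main obstacle: one must make sure the decomposition interacts correctly with the $T$-equivariance so that Lemma \ref{l:freeaction} genuinely applies.

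Finally I would address that a bijective $K\cap\bar B$-equivariant morphism of varieties need not a priori be an isomorphism, so one should check $\IB$ is an isomorphism of varieties. The cleanest route is to write down the inverse explicitly: given $y = k\,\epsilon(x)\in[K\cap\bar B]\cdot\epsilon(\frg_{-1})$, the pair $(k,x)$ is recovered (up to the $T$-relation) by the algebraic recipe of Lemma \ref{l:freeaction}'s proof — solving $\exp(z_n)\cdots\exp(z_1)\cdot\frb = y'$ for $z = \sum z_i$ in the big cell $\bar U\cdot\frb$ is given by polynomial formulas in matrix entries, and extracting $k\in K\cap\bar B$ from $y$ likewise. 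Thus $\IB^{-1}$ is a morphism and $\IB$ is an isomorphism. Alternatively, one can invoke smoothness: $\frg_{-1}$ is a vector space, $[K\cap\bar B]\times_T\frg_{-1}$ is therefore smooth, the target is an open subset of the smooth variety $\overline Q_\Pi$ (by Lemma \ref{l:freeaction} and Proposition \ref{p:gfinite}, the orbit through $\epsilon(\frg_{-1})$ has the expected dimension and $[K\cap\bar B]\cdot\epsilon(\frg_{-1})$ is a single $K\cap\bar B$-orbit, hence smooth), a bijective morphism of smooth varieties in characteristic zero is an isomorphism, so $\IB$ is an isomorphism. Either way completes the proof.
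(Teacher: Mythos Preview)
Your injectivity argument is essentially the paper's own: both decompose elements of $K\cap\bar B$ as $(\bar U\cap K)\cdot T$, use $T$-equivariance of $\epsilon$ to push the torus part onto $x$, and then invoke the two assertions of Lemma~\ref{l:freeaction} to force the unipotent part to be trivial and the images under $\epsilon$ to coincide. You are in fact slightly more careful than the paper at one point: the step from $\epsilon(tx)=\epsilon(x')$ to $tx=x'$ is an injectivity statement about $\epsilon$ that the paper's proof silently uses (it writes ``By Lemma~\ref{l:freeaction}, $\mathrm{Ad}(t_1)x_1=\mathrm{Ad}(t_2)x_2$'' without comment), and you correctly observe that this follows from the big-cell computation underlying Lemma~\ref{l:inj} and Lemma~\ref{l:freeaction}.

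One genuine error to flag: your alternative ``smoothness'' route to upgrading the bijection to an isomorphism does not work as written. The closure $\overline{Q_\Pi}$ is in general singular (this is emphasized throughout the paper), and $[K\cap\bar B]\cdot\epsilon(\frg_{-1})$ is certainly not a single $K\cap\bar B$-orbit --- it is a union of orbits indexed by subsets $S\subset\Pi$, of varying dimensions, exactly mirroring the $T$-orbit stratification of $\frg_{-1}$. So neither premise of that argument holds. Your first approach (producing an explicit algebraic inverse via the big-cell coordinates) is the correct one; the paper's own proof, incidentally, establishes only bijectivity and does not separately address the isomorphism claim.
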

\begin{proof}
Clearly $\IB$ is surjective and $K \cap \bar B$ equivariant.  We prove that $\IB$ is injective. Suppose $\IB(k_1, x_1) = \IB(k_2,x_2) .$ 
Write   $x_k = \underset{i = 1}{\sum}^t z^k_{j_i}$ with $z^k_{j_i} $ in the root space for $\frg_{-\alpha_{j_i}}$.
Then  
\[ 
\epsilon (x_k) = \text{exp } (z^k_{j_t})\ldots  \text{exp } (z^k_{j_1})\cdot  \frb.
\]
Write $k_1 = \bar{n}_1\;  t_1$ with $\bar{n}_1\in [\bar{U}\cap K] $ and $t_1 \in T.$
Similarly, write  $k_2 = \bar{n}_2\;  t_2.$
If $t \in T$, we have,
\[
\epsilon (\text{Ad} (t) x_k) = \text{exp } (\text{Ad} (t) z^k_{j_t})\ldots  \text{exp } (\text{Ad} (t) z^k_{j_1})\cdot  \frb.
\]
Then, the equality
 $\IB(k_1, x_1) = \IB(k_2,x_2), $  becomes
\begin{equation}\label{a}
\bar{n}_1 \; \epsilon(\text{Ad} (t_1) x_1)  =  \bar{n}_2\;  \epsilon(\text{Ad} (t_2 ) x_2).\end{equation}
%
By Lemma \ref{l:freeaction},  $\text{Ad}(t_1) x_1 = \text{Ad }(t_2) x_2 .$

 Moreover,
 \[
 \IB(k_1, x_1)  =  \bar{n}_1 \epsilon (\text{Ad} (t_1) x_1) = \bar{n}_1 \; \epsilon(\text{Ad}(t_2) x_2 )
 \]
 which equals
  \[
  \IB(k_2, x_2) =  \bar{n}_2 \; \epsilon (\text{Ad}(t_2) x_2)
  \]
 by hypothesis.
 In particular, $\bar{n}_1 \; \epsilon(\text{Ad}(t_2) x_2 ) = \bar{n}_2 \; \epsilon (\text{Ad}(t_2) x_2) .$
 By Lemma \ref{l:freeaction} again, $\bar{n}_1 = \bar{n}_2.$ 
 We conclude that
\begin{align*}
(k_2, x_2) &\sim (\bar{n}_2\; t_2,   \text{Ad}(t_2^{-1} )  \text{Ad}(t_1) x_1)  \sim (\bar{n}_2, \text{Ad}(t_1) x_1) \\
&\sim (\bar{n}_1, \text{Ad}(t_1 ) x_1)  \sim (k_1 t_1^{-1},  \text{Ad}(t_1 ) x_1) \sim (k_1, x_1),\end{align*}
 as we wished to show.
\end{proof}

\medskip

\begin{cor}
\label{c:ib-matching}
In the setting of Proposition \ref{p:ib}, there
\red{is a} natural correspondence of
$T$ orbits on $\frg_{-1}$ and $K\cap \bar B$ orbits on $[K\cap \bar B]\cdot\epsilon(\frg_{-1})$,
\[
T\cdot x \mapsto [K \cap \bar B] \cdot \epsilon(x).
\]
Write $A_T(x)$ for the component group of the 
centralizer of $x$ in $T$, and similarly for $A_{K \cap \bar B}(\epsilon(x))$.  Then the natural map
$T \rightarrow [K \cap \bar B]$ induces
an isomorphism
\[
A_T(x) \simeq A_{K\cap \bar B}(\epsilon(x)).
\]
The resulting natural bijection
\begin{equation}
\label{e:Xi-ib-matching}
\Xi(K\cap \bar B, [K\cap \bar B]\cdot\epsilon(\frg_{-1})) \rightarrow \Xi(T,\frg_{-1})  
\end{equation}
implements an identification of the geometric mulitplicity matrices of Section \ref{s:cgp},
\begin{equation}
\label{e:ib-matching}
C_{T,\frg_{-1}} = C_{K\cap \bar B, [K\cap \bar B]\cdot\epsilon(\frg_{-1})}.
\end{equation}
\end{cor}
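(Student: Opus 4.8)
The statement is essentially a formal consequence of the isomorphism of induced bundles in Proposition \ref{p:ib}, combined with the standard fact that forming an induced bundle $H' \times_H X$ from the $H$-variety $X$ sets up an equivalence of categories of equivariant (perverse or constructible) sheaves $\caP(H,X) \simeq \caP(H', H'\times_H X)$ that is compatible with the stratifications, the local systems, and the standard-versus-IC bases. So my first step is to record this: by Proposition \ref{p:ib}, $\IB$ identifies $[K\cap\bar B]\cdot\epsilon(\frg_{-1})$ with the induced bundle $[K\cap\bar B]\times_T \frg_{-1}$ as a $K\cap\bar B$-variety, so it suffices to prove all the assertions for the pair $\bigl(T \subset K\cap\bar B,\ \frg_{-1}\bigr)$ in place of the pair $\bigl(K\cap\bar B,\ [K\cap\bar B]\cdot\epsilon(\frg_{-1})\bigr)$, i.e. to prove the general statement: for $H \subset H'$ with $H$ acting on $X$ with finitely many orbits, induction $X \rightsquigarrow H'\times_H X$ matches orbits, component groups, and geometric multiplicity matrices.

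Second step: the orbit correspondence. Under $X \hookrightarrow H'\times_H X$, $x \mapsto [1,x]$, every $H'$-orbit on $H'\times_H X$ meets $X$ in a single $H$-orbit, and conversely; this is immediate from the definition of the quotient. So $H\cdot x \mapsto H'\cdot[1,x]$ is a bijection of orbit sets, order-preserving for the closure orders (closures are computed the same way, since $H'\times_H(\overline{H\cdot x}) = \overline{H'\cdot[1,x]}$). Third step: the component groups. The stabilizer of $[1,x]$ in $H'$ is exactly $\mathrm{Stab}_H(x)$ — if $h'\cdot[1,x]=[1,x]$ then $[h',x]=[1,x]$ forces $h'\in H$ and $h'x=x$ — so $Z_{H'}([1,x]) = Z_H(x)$ on the nose, hence $A_{H'}([1,x]) = A_H(x)$, and under the matching $H=T$, $H'=K\cap\bar B$ this gives the claimed $A_T(x)\simeq A_{K\cap\bar B}(\epsilon(x))$. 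Consequently the parametrizing sets $\Xi$ match: a pair (orbit, irreducible equivariant local system) on $X$ corresponds to (induced orbit, the local system pulled back–pushed forward along the bundle projection $H'\cdot[1,x] \to H\cdot x$, which is an $H$-torsor-type map inducing an equivalence on equivariant local systems because the stabilizers agree). This yields the bijection \eqref{e:Xi-ib-matching}.

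Fourth step, and the only one with any content: the equality \eqref{e:ib-matching} of geometric multiplicity matrices. Here I invoke that the induction functor $\caC(H,X) \to \caC(H',H'\times_H X)$, $\caF \mapsto \caF^{\mathrm{ind}}$ (pull back along $H'\times X \to X$, descend) is an equivalence of categories that is $t$-exact for the perverse $t$-structures up to the shift by $\dim(H'/H)$, sends $\c(\psi)$ to $\c(\psi^{\mathrm{ind}})$ and $\per(\gamma)$ to $\per(\gamma^{\mathrm{ind}})$ (intersection cohomology commutes with smooth pullback with connected fibers, up to shift), and transforms the parity/dimension bookkeeping $d(\psi)$ by the constant $\dim(H'/H)$, which cancels between the two sides of \eqref{e:gengeomdecomp}. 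Therefore the change-of-basis matrix expressing $[\per(\gamma)]$ in the $[\c(\psi)]$ basis is literally unchanged. Citing \cite[Chapter 7]{abv} for the properties of the induction equivalence, this is the whole argument. The main obstacle — really a bookkeeping point rather than a true difficulty — is making sure the normalizations match: that the $(-1)^{d(\psi)}$ signs and the shifts in the definition of $\c$, $\per$ are set up so that the overall $\dim(H'/H)$ shift drops out; once that is checked, the matrices coincide term by term. No subtlety arises from $K$ being possibly disconnected, since Proposition \ref{p:ib} was proved in that generality.
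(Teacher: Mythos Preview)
Your proposal is correct and follows essentially the same approach as the paper: reduce via the isomorphism of Proposition \ref{p:ib} to the induced bundle $[K\cap\bar B]\times_T \frg_{-1}$, then invoke the standard properties of induction (orbit bijection, matching stabilizers/component groups, equivalence of equivariant sheaf categories preserving the $\con$/$\per$ bases). The paper compresses all of your steps two through four into a single citation of \cite[Proposition 7.14]{abv}, whereas you have spelled out what that proposition says; but the content and the route are the same.
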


\begin{proof}
According to \cite[Proposition 7.14]{abv}, 
there is a natural correspondence of $T$ orbits on $\frg_{-1}$ and $K\cap\bar B$ orbits
on the induced bundle $ [K  \cap \bar B] \times_{T} \frg_{-1} $ with the corresponding properties as listed in the 
corollary.  Composing with the isomorphism of Proposition \ref{p:ib} completes the proof.
\end{proof}

\medskip

\subsection{Matching}
\label{ssec:matching}
\red{We now assume $K$ is connected to establish consequences of the density statement of Corollary \ref{c:opendense}.  The reason for the connectedness
hypothesis is to ensure that the $K$ orbits $Q_S$ (and their closures) are irreducible.  See Remark \ref{r:conn} for a further discussion.}

Recall that $K\cdot \epsilon(\frg_{-1}) = \overline Q_\Pi$, by definition, and fix $\gamma \in
\Xi(K,\ol Q_\Pi)$.  \red{Fix $x$ in the support of $\con(\gamma)$, and} let $A_K(\epsilon(x))$ denote the component group of the centralizer of $\epsilon(x)$ in $K$,
an elementary 2-group.  Since $A_K(\epsilon(x))$ is abelian, the irreducible constructible sheaf $\con(\gamma)$
has one-dimensional stalks, and since $K$ is connected, its support is irreducible.  It therefore restricts to an
irreducible constructible sheaf on $[K\cap \bar{B}] \cdot \epsilon (x) \red{\subset [K\cap \bar{B}] \cdot \epsilon (\frg_{-1})}$ of the form $\con(\gamma')$ for some
$\gamma'$ in $\Xi(K\cap \bar B [K\cap \bar{B}] \cdot \epsilon (\red{\frg_{-1}})$.  The assignment $\gamma \mapsto \gamma'$ gives a map
\begin{equation}
\label{e:Xi-match2}
\Xi(K,\ol Q_\Pi) \rightarrow \Xi(K\cap \bar B, [K\cap \bar{B}] \cdot \epsilon \red{(\frg_{-1})}).
\end{equation}
The density of Corollary \ref{c:opendense}
implies that the perverse sheaf $\per(\gamma)$ restricts to $\per(\gamma')$.   Thus, if
$\gamma$ and $\delta$ map to $\gamma'$ and $\delta'$ in \eqref{e:Xi-match2}, the 
corresponding entries of the geometric multiplicity matrices of Section \ref{s:cgp} match,
\begin{equation}
\label{e:cgp-match2}
C_{K,\ol Q_\Pi}(\gamma, \delta) =C_{K\cap \bar B, [K\cap \bar{B}] \cdot \epsilon (x)}(\gamma',\delta').
\end{equation}
 If we compose the maps of \eqref{e:Xi-ib-matching} and \eqref{e:Xi-match2}, we obtain a map of parameters
\begin{equation}
\label{e:Xi-pullback2}
\Phi \; : \; \Xi(K, \overline Q_\Pi) \lra \Xi(T,\frg_{-1}),
\end{equation}
which can be described explicitly as follows.
Consider the natural map
\[
 A_{T}(x) \rightarrow  A_{K}(\epsilon(x)).
\]
Since the right-hand side is abelian, we get a pullback on irreducible representations,
\begin{equation}
\label{e:A-pullback}
\widehat{A_{K}(\epsilon(x))} \lra \widehat{A_T(x))}.
\end{equation}
Because irreducible local systems supported on $K\cdot \epsilon(x)$ are parametrized by irreducible representation
of $A_K(\epsilon(x))$, and similarly for irreducible local systems on $\frg_{-1}$ and $A_{T}(x)$, we obtain the
map $\Phi$ of \eqref{e:Xi-pullback2}.
Unwinding the definitions, we see that if $\psi$ is the trivial local
system on $Q_S$, then $\Phi(\psi)$ is the trivial local system on $\caO_S$.  Thus, the matching of the geometric
multiplicity matrices in \eqref{e:ib-matching} and \eqref{e:cgp-match2} implies our main result:

\begin{theorem}
\label{t:main}
Retain the notation of the previous paragraph, especially the definition of $\Phi$ in \eqref{e:Xi-pullback2}.  Recall
that $K$ is assumed to be connected.
\begin{enumerate}
\item[(a)]
As
in \eqref{e:gengeomdecomp},
we define the geometric change of basis matrix as follows,
\begin{equation}
\label{e:gengeomdecomp1}
[\per(\gamma)] =  \underset{\delta \in \Xi(K,\overline Q_\Pi) }{\sum } (-1)^{d(\gamma)} \;  C_{K,\overline Q_\Pi}(\gamma, \delta) [\con (\delta)].
\end{equation}
Similarly, define
\begin{equation}
\label{e:gengeomdecomp2}
[\per(\phi)] =  \underset{\psi \in \Xi(T,\frg_{-1}) }{\sum } (-1)^{d(\psi)} \;  C_{T,\frg_{-1}}(\phi,\psi ) [\con (\psi)].
\end{equation}
Then,
\[
C_{K,\overline Q_\Pi}(\psi, \gamma) = C_{T,\frg_{-1}}(\Phi(\psi), \Phi(\gamma)).
\]
\medskip

\item[(b)]
In the notation of \eqref{e:Qs}, fix ordered subsets $S',S \subset \Pi$, and suppose $\psi$ is the trivial local 
local system on $Q_{S'}$ and $\gamma$ is the trivial local system on $Q_S$.  Then, in the notation
of \eqref{e:Os}, $\Phi(\psi)$ is the trivial local system on $\caO_{S'}$ and $\Phi(\gamma)$ is the trivial local system
on $\caO_S$.  Because the closures of the orbits $\caO_{S'}$ and $\caO_S$ are smooth, we conclude
\[
C_{K,\overline Q_\Pi}(\psi, \gamma) = C_{T,\frg_{-1}}(\Phi(\psi), \Phi(\gamma)) = \begin{cases} 1 &\text{if $S' \subset S$}\\
0 &  \text{if $S' \not\subset S$};
\end{cases}
\]
cf.~\eqref{e:intro-klv}.
\end{enumerate}
\end{theorem}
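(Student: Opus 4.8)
The plan is to prove Theorem \ref{t:main} by splicing together the matchings prepared in the preceding subsections, so that no genuinely new geometry enters. For part (a), recall first that the induced-bundle isomorphism of Proposition \ref{p:ib}, combined with the standard behaviour of equivariant perverse sheaves under induction, yields the identification \eqref{e:ib-matching} of $C_{T,\frg_{-1}}$ with $C_{K\cap\bar B,\,[K\cap\bar B]\cdot\epsilon(\frg_{-1})}$ under the bijection \eqref{e:Xi-ib-matching} (this is Corollary \ref{c:ib-matching}). Recall next that, by Corollary \ref{c:opendense}, restriction along the inclusion $[K\cap\bar B]\cdot\epsilon(\frg_{-1})\hookrightarrow\overline Q_\Pi$ is exact on Grothendieck groups and carries $\con(\gamma)$ to $\con(\gamma')$ and $\per(\gamma)$ to $\per(\gamma')$ for the parameter map \eqref{e:Xi-match2}, which is the content of \eqref{e:cgp-match2}. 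I would then simply compose \eqref{e:Xi-match2} with \eqref{e:Xi-ib-matching} to obtain the map $\Phi$ of \eqref{e:Xi-pullback2}, and read off the chain of equalities $C_{K,\overline Q_\Pi}(\psi,\gamma)=C_{K\cap\bar B,[K\cap\bar B]\cdot\epsilon(\frg_{-1})}(\psi',\gamma')=C_{T,\frg_{-1}}(\Phi(\psi),\Phi(\gamma))$, which is part (a).

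For part (b), with (a) in hand it remains only to compute $C_{T,\frg_{-1}}$ on the trivial local systems on the $T$-orbits $\caO_S\subset\frg_{-1}$, and here the geometry is completely transparent. By the description in Section \ref{s:orbits}, $\overline{\caO_S}=\bigoplus_{\alpha\in S}\frg_{-\alpha}$ is a linear subspace of $\frg_{-1}$ — in particular smooth — and $\caO_{S'}\subset\overline{\caO_S}$ precisely when $S'\subset S$. Since the intersection cohomology complex of a smooth variety with constant coefficients is a shift of the constant sheaf, the perverse sheaf attached to the trivial local system on $\caO_S$ restricts to a shift of the constant sheaf on every stratum $\caO_{S'}\subset\overline{\caO_S}$ and vanishes off $\overline{\caO_S}$; hence $C_{T,\frg_{-1}}(\Phi(\psi),\Phi(\gamma))$ is $1$ when $S'\subset S$ and $0$ when $S'\not\subset S$. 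Feeding this back through part (a), and using Corollary \ref{c:closure} to recognise the case $S'\not\subset S$, gives the asserted values of $C_{K,\overline Q_\Pi}$, and with them \eqref{e:intro-klv}.

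I expect essentially all of the real work to be already contained in Propositions \ref{p:res} and \ref{p:ib} and Corollaries \ref{c:ib-matching} and \ref{c:opendense}; within the proof of the theorem itself the one delicate point is the compatibility of the two independently constructed parameter identifications \eqref{e:Xi-match2} and \eqref{e:Xi-ib-matching} at the level of local systems — one must verify that $\Phi$ sends the trivial local system on $Q_S$ to the trivial local system on $\caO_S$ (tracing through the component-group isomorphisms $A_T(x)\simeq A_{K\cap\bar B}(\epsilon(x))$ of Corollary \ref{c:ib-matching} and the pullback \eqref{e:A-pullback}), not merely $Q_S$ to $\caO_S$. The connectedness of $K$ enters only to force the orbits $Q_S$ and their closures to be irreducible, which is precisely what makes $\per(\gamma)|_{[K\cap\bar B]\cdot\epsilon(\frg_{-1})}$ an \emph{irreducible} perverse sheaf $\per(\gamma')$; see Remark \ref{r:conn}.
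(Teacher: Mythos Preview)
Your proposal is correct and follows essentially the same route as the paper: the proof is assembled entirely from Corollary \ref{c:ib-matching} (giving \eqref{e:ib-matching}), the density of Corollary \ref{c:opendense} (giving \eqref{e:cgp-match2}), their composition defining $\Phi$, the observation that $\Phi$ carries trivial local systems to trivial local systems, and the smoothness of $\overline{\caO_S}$. The paper in fact places all of this in the discussion immediately preceding the theorem and simply writes \qed afterward, so your expectation that no new work occurs inside the proof is exactly right.
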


\qed

\begin{example}
\label{e:classical}
\red{If $K$ is connected --- which is automatic if $G$ is simply connected, for example, as in the examples below --- we always get the matching of Theorem \ref{t:main}}.   But
it is interesting to ask when $\Phi$ in \eqref{e:Xi-pullback2} is surjective; or, equivalently, when the map in
\eqref{e:A-pullback} is surjective as $x$ ranges over representatives of $T$ orbits on $\frg_{-1}.$  In this case, every Kazhdan-Lusztig polynomial
that appears for $T$ orbits on $\frg_{-1}$ is matched with a Kazhdan-Lusztig-Vogan polynomial for $K$ orbits 
on $\caB$ by Theorem \ref{t:main}(a).  Note that $\epsilon$ (and hence $\Phi$) depends on a choice of ordering of $\Pi$.

For $G = \GL(n,\C)$, \red{$K \simeq \GL(\lceil n/2 \rceil) \times \GL(\lfloor n/2 \rfloor)$ is connected}.  All $A$-groups are trivial, so $\Phi$ is clearly surjective.  When the ordering is the standard Bourbaki
ordering, the map $\Phi$ essentially appears in \cite[Section 2.5]{ct}.  The other orderings give rise to maps $\Phi$
that are different (as already seen in Example \ref{e:gl4}).

For $G=\SL(n,\C)$, \red{$K$ is again connected}, but there is no chance for $\Phi$ to be surjective for $n\geq3$.  When $S=\Pi$, $A_T(x_S) = \bbZ/n\bbZ$, while $A_K(\epsilon(x_S))$ is an elementary 2-group.  To obtain a complete matching of polynomials, one must match various blocks of $\Xi(T,\frg_{-1})$ with $K$ orbits constructed in $\SL(n/d,\C)$ for $d|n$.  

When $G=\mathrm{Sp}(2n,\C)$, $K \simeq \GL(n,\bbC)$ is connected, but the choice of ordering on $\Pi$ is important, as Example \ref{e:sp4} already indicates.  In fact, that
example generalizes quite naturally as follows. Let $\beta \in \Pi$
denote the long simple root, and let $\alpha$ denote the unique short simple root not orthogonal to $\beta$.  If $\beta \in S$, then $A_T(x_S) = \bbZ/2\bbZ$; in all other cases  $A_T(x_S)$ is trivial.  If the order on $\Pi$ is chosen so that
$\beta$ appears before $\alpha$, and $\beta \in S$, then $A_K(\epsilon(x_S)) = \bbZ/2\bbZ$; in all other cases the $A_K(\epsilon(x_S))$ is trivial.  Thus $\Phi$ is surjective in this case.  However, if $\beta$ appears {\em after}
$\alpha$ in the ordering on $\Pi$, and $S$ is any subset containing both $\alpha$ and $\beta$, we have  $A_T(x_S) = \bbZ/2\bbZ$
while $A_K(\epsilon(x_S))$ is trivial; so in this case, the map $\Phi$ is not surjective.

For $\mathrm{Spin}(n,\bbC)$, \red{$K$ is again connected.  The situation is similar, and we simply state the results. Suppose first that  $n$ is even, let $\alpha$ and $\beta$ be the  roots at the end of the ``fork'' of the Dynkin diagram.  That is, $\alpha$ and $\beta$ 
are orthogonal to each other; and there is a unique simple root $\gamma$ that is not orthogonal to either $\alpha$ or $\beta$.  If the order on $\Pi$ is chosen so that $\alpha$ and $\beta$ (in either order) appear before all other simple roots, then the map $\Phi$ is surjective (but can fail to be so for other choices).
 In the case of  $n$  odd,  Example \ref{e:sp4} again shows that the choice of $\Pi$ is important since $\mathrm{Spin}(5) \simeq\mathrm{Sp}(4)$.
 Let $\beta$ denote the unique short simple root, and let $\alpha$ denote the unique long simple root
not orthogonal to $\beta$. 
If the order on $\Pi$ is chosen so that $\beta$ appears first, then $\Phi$ is again surjective (but again can fail to be so for other choices)}.

 \end{example}

\begin{remark}
\label{r:conn}
\red{
Suppose $K$ is disconnected, and let $K_e$ denote its identify component.  One can always repeat the constructions above replacing $K$ by $K_e$ and obtain a version of Theorem \ref{t:main}.  (A disadvantage of this approach is that the $K_e$ may no longer correspond to an algebraic real form of $G$.  Passing from $K$ to $K_e$ can also complicate the surjectivity considerations addressed in Example \ref{e:classical}.) If one instead deals with the disconnected $K$ itself, a complication arises when $Q_{S'} \subset \overline{Q_S}$, $Q_{S'}$ is a single $K_e$ orbit, but 
$Q_S$ is a union of multiple $K_e$ orbits $Q_S^i$, each of which has $Q_{S'}$ in its closure.  The image of $\caO_S$ under $\epsilon$ is contained in a single $K_e$ orbit, say $Q_{S}^1$.  But the other irreducible components $\overline{Q_{S}^i}$ for $i > 1$ also contribute to the intersection homology along $Q_{S'}$.  Thus the multiplicities arising in the $K$ orbits setting can be strictly larger than those appearing in the $T$ orbit setting.  
}

\red{
Many interesting phenomena can be seen in the case of $G= \SO(n)$.   In this case, $K$ consists of the determinant 1 elements in $\mathrm{O}(\lceil n/2 \rceil) \times \mathrm{O}(\lfloor n/2 \rfloor)$ and has two components.  If one chooses an order on $\Pi$ which is {\em not} the  order described in Example \ref{e:classical} ensuring surjectivity, then the phenomenon in the previous paragraph can definitely arise.  Indeed, already for $n=8$, one can choose an ordering that leads to instances of $C_{K,\overline Q_\Pi}(\psi, \gamma) =2$ in contrast to the conclusion
of Theorem \ref{t:main}(2).  However, if one chooses the good order of $\Pi$ described in Example \ref{e:classical}, one can perform a direct analysis of the orbits in question to rule out the phenomenon in the previous paragraph, and deduce that the conclusions of Theorem \ref{t:main} hold.   In this case, $\Phi$ is also surjective.  We omit the details.  
}
\end{remark}

\end{document}